\title[Subspaces of spaces with unconditional bases]{Characterising subspaces of Banach spaces with a  Schauder basis having the shift property}
\author {Christian Rosendal}
\thanks{The author acknowledges the support of  NSF grant DMS 0901405}
\subjclass[2000]{Primary: 46B03}
\keywords{Schauder bases, shift property, embedding theorems}
\address{Department of Mathematics, Statistics, and Computer Science (M/C 249)\\
University of Illinois at Chicago\\
851 S. Morgan St.\\
Chicago, IL 60607-7045\\
USA}
\email{rosendal.math@gmail.com}
\urladdr{http://www.math.uic.edu/$~$rosendal}
\newcommand {\B}{\mathbb B}
\newcommand {\N}{\mathbb N}
\newcommand{\eps}{\epsilon}
\newcommand{\tom} {\emptyset}
\newcommand{\saa}{\Rightarrow}
\newcommand{\equi}{\Leftrightarrow}
\newcommand{\til}{\rightarrow}
\newcommand{\Lim}[1]{\mathop{\longrightarrow}\limits_{#1}}
\newcommand {\del}{ \; \big| \;}
\newcommand {\e} {\exists}
\renewcommand {\a} {\forall}
\newcommand{\norm}[1]{\lVert#1\rVert}
\newcommand{\Norm}[1]{\big\lVert#1\big\rVert}
\newcommand{\triple}[1]{|\!|\!|#1|\!|\!|}
\newcommand{\Triple}[1]{\big|\!\big|\!\big|#1\big|\!\big|\!\big|}
\newtheorem{thm}{Theorem}
\newtheorem{lemme}[thm]{Lemma}
\newtheorem{prop} [thm] {Proposition}
\newtheorem{defi} [thm] {Definition}
\newtheorem{prob}[thm]{Problem}
\begin{document}
 \maketitle

\begin{abstract}
We give an intrinsic characterisation of the separable reflexive Banach spaces
that embed into separable reflexive spaces with an unconditional basis all of
whose normalised block sequences with the same growth rate are equivalent. This
uses methods of E. Odell and T. Schlumprecht.
\end{abstract}

\section{The shift property}
We consider in this paper a property of Schauder bases that has come up on several
occasions since the first construction of a truly non-classical Banach space
by B. S. Tsirelson in 1974 \cite{tsirelson}. It is a weakening of the property of  perfect homogeneity, which
replaces the condition 
\begin{quote}
{\em all normalised block bases are equivalent}
\end{quote}
with the weaker 
\begin{quote}
{\em all normalised block bases with the same growth rate are equivalent,}
\end{quote}
and is satisfied by bases constructed along the lines of the Tsirelson basis,
including the standard bases for the Tsirelson space and its dual.

To motivate our study and in order to fix ideas,  in the following result
we sum up a number of conditions that have been studied at various occasions in the literature and that can all be seen to be reformulations of the aforementioned property. Though I know of no single reference for the proof of the equivalence, parts of it are implicit in J. Lindenstrauss and L. Tzafriri's paper \cite{lindenstrauss} and the paper by P. G.  Casazza, W. B. Johnson and L. Tzafriri \cite{casazza}. Moreover, any idea needed for the proof can be found in, e.g., the book by F. Albiac and N. J. Kalton \cite{albiac} (see Lemma 9.4.1, Theorem 9.4.2. and Problem 9.1) and the statement should probably be considered folklore knowledge.
\begin{thm}\label{equivalences}
Let $(e_n)_{n=1}^\infty$ be a normalised unconditional Schauder basis for a Banach space $X$. Then the following conditions are equivalent.
\begin{enumerate}
\item Any block subspace is complemented.
\item Any block subspace $[x_n]_{n=1}^\infty$ is complemented by a projection $P$ such that 
$$
Pz=\sum_{n=1}^\infty x^*_n(z)x_n,
$$
where $x_n^*\in X^*$ satisfy ${\rm supp}\; x^*_n\subseteq {\rm supp}\; x_n$.
\item  If $(x_n)_{n=1}^\infty$ and $(y_n)_{n=1}^\infty$ are normalised block sequences of $(e_n)_{n=1}^\infty$ with
$$
x_1<y_1<x_2<y_2<\ldots,
$$
then $(x_n)_{n=1}^\infty\sim (y_n)_{n=1}^\infty$.
\item If $(x_n)_{n=1}^\infty$ is a normalised block basis, then $(x_{n})_{n=1}^\infty\sim(x_{n+1})_{n=1}^\infty$.
\item If $(x_i)_{i=1}^\infty$ and $(y_i)_{i=1}^\infty$ are normalised block sequences such that 
$$
\max({\rm supp}\; x_i\cup {\rm supp}\; y_i)<\min({\rm supp}\; x_{i+1}\cup {\rm supp}\; y_{i+1})
$$
for all $i$, then $(x_i)_{i=1}^\infty\sim(y_i)_{i=1}^\infty$.
\item For all normalised block bases $(x_n)_{n=1}^\infty$, if $k_n\in {\rm supp}\; x_n$ for all $n$,
then $(e_{k_n})_{n=1}^\infty\sim (x_n)_{n=1}^\infty$.
\end{enumerate}
Moreover, if any of the above properties hold, then they do so uniformly, e.g.,
in (4) there is a constant $C$ such that for all normalised block bases
$(x_n)_{n=1}^\infty$, we have $(x_n)_{n=1}^\infty\sim_C(x_{n+1})_{n=1}^\infty$. 
\end{thm}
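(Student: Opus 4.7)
The strategy is to cycle through the six conditions, with the two non-trivial bridges $(1)\Leftrightarrow(2)$ and $(5)\Rightarrow(2)$ handled by explicit constructions using averaging over unconditional signs and Hahn--Banach extensions respectively, and with the equivalences among $(3)$--$(6)$ carried out combinatorially.

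For $(1)\Rightarrow(2)$, take a bounded projection $Pz=\sum_n f_n(z)x_n$ onto $[x_n]$, set $A_n:=\operatorname{supp} x_n$, and for each sign sequence $\eps\in\{\pm 1\}^\N$ form $S_\eps:=\sum_n\eps_n P_{A_n}$, which is bounded by unconditionality of $(e_n)$. A direct computation shows that $S_\eps P S_\eps$ is again a projection onto $[x_n]$ (since $S_\eps x_n=\eps_n x_n$), and averaging over Rademacher signs cancels the cross terms:
$$
\mathbb{E}_\eps\bigl[S_\eps P S_\eps z\bigr]=\sum_n f_n(P_{A_n}z)\,x_n.
$$
This is the diagonal projection of $(2)$, whose coefficient functionals $z\mapsto f_n(P_{A_n}z)$ are automatically supported in $A_n$, and it is bounded since the original projection and the support projections $P_{A_n}$ are. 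The converse $(2)\Rightarrow(1)$ is immediate.

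Among $(3)$--$(6)$ I would use $(6)$ as the central hub. From $(2)$, comparing the diagonal projections associated with $(x_n)$ and $(e_{k_n})$ for $k_n\in A_n$ yields $(6)$ uniformly. Conversely $(6)$ implies $(3)$ and $(5)$ as follows: given sequences $(x_i),(y_i)$ as in $(5)$, apply $(6)$ separately to $(x_i)$, $(y_i)$ and to the auxiliary block basis $z_i:=(x_i+y_i)/\|x_i+y_i\|$ with representatives chosen alternately in $\operatorname{supp} x_i$ and $\operatorname{supp} y_i$, yielding the chain $(x_i)\sim(e_{k_i})\sim(z_i)\sim(e_{\ell_i})\sim(y_i)$; the shift property $(4)$ follows by an analogous double-block argument on $z_n:=(x_{2n-1}+x_{2n})/\|\cdot\|$, together with the classical merging/interleaving manipulations that link $(3)$ and $(4)$. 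The implication $(5)\Rightarrow(6)$ is obtained by the trivial specialisation $y_i:=e_{k_i}$.

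The main obstacle is closing the cycle via $(5)\Rightarrow(2)$. Given a normalised block basis $(x_n)$ with supports $A_n$, use Hahn--Banach inside the finite-dimensional space $[e_k:k\in A_n]$ to obtain norm-one functionals $x_n^*$ satisfying $x_n^*(x_n)=1$, and extend each to $\tilde f_n\in X^*$ by $\tilde f_n(z):=x_n^*(P_{A_n}z)$, noting that $\operatorname{supp}\tilde f_n\subseteq A_n$. Set $Pz:=\sum_n\tilde f_n(z)\,x_n$; trivially $Px_m=x_m$, so only boundedness requires work. This is where $(5)$ is genuinely used: after normalisation the block sequences $(\tilde f_n(z)x_n)_n$ and $(P_{A_n}z)_n$ have their supports inside $A_n$, and hence $(5)$ gives their uniform equivalence with some constant $C$. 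Combined with the elementary bound $|\tilde f_n(z)|\leq\|P_{A_n}z\|$ (from $\|x_n^*\|=1$) and the contraction principle from unconditionality, this yields $\|Pz\|\leq C'\|z\|$, where $C'$ depends only on $C$ and the unconditional constant of $(e_n)$. Uniformity of the constants in the final "moreover" clause then follows by tracking constants through each step of the cycle, each of which depends only on the unconditional constant of $(e_n)$ together with the constant in the equivalence hypothesis used.
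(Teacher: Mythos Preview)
Your plan has two genuine gaps.

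The more serious one is the step $(2)\Rightarrow(6)$. Applying the diagonal projection $Pz=\sum_n x_n^*(z)x_n$ to $z=\sum_n a_n e_{k_n}$ yields $Pz=\sum_n a_n x_n^*(e_{k_n})\,x_n$, but nothing forces the scalars $x_n^*(e_{k_n})$ to be bounded away from zero: if $A_n=\{j,k_n\}$ with $j\neq k_n$ and the functional supplied by $(2)$ happens to be a multiple of the coordinate functional $e_j^*$, then $x_n^*(e_{k_n})=0$. The symmetric attempt, projecting $\sum a_n x_n$ onto $[e_{k_n}]$, picks up the $k_n$-th coefficient of $x_n$, which may likewise be arbitrarily small. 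So ``comparing the diagonal projections'' cannot produce the equivalence $(x_n)\sim(e_{k_n})$. The paper's route out of $(2)$ is genuinely different: it proves $(2)\Rightarrow(3)$ by applying $(2)$ not to $(x_n)$ or $(y_n)$ separately but to the perturbed block sequence $w_n=x_n+s_n y_n$ with $s_n\to0$, observing that the associated functionals then satisfy $w_n^*(x_n)=1-s_n w_n^*(y_n)\to1$, and using unconditionality to transfer convergence of $\sum a_n x_n$ to convergence of $\sum a_n y_n$. That perturbation trick is the missing idea.

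The second gap is uniformity. In your $(5)\Rightarrow(2)$ you assert that ``$(5)$ gives their uniform equivalence with some constant $C$'', but $(5)$ as stated carries no constant, and the normalised block sequences $(P_{A_n}z/\|P_{A_n}z\|)_n$ to which you apply it depend on $z$; without a uniform $C$ the operator $P$ need not be bounded. For the same reason one cannot derive the ``moreover'' clause by ``tracking constants through each step of the cycle'': the cycle passes through $(1)$ and $(2)$, which come with no constant to track. The paper handles this by a concatenation argument inside the proof of $(6)\Rightarrow(1)$: if $(6)$ failed uniformly, one could splice together finite pieces with ever worse constants of equivalence into a single normalised block sequence violating $(6)$ itself. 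This upgrades $(6)$ to uniform $(6)$, and only then is the projection constructed. You need the analogous step (for $(5)$ or $(6)$) before your projection bound can go through, and it is this step, not constant-chasing around the cycle, that actually delivers the uniformity.
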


An unconditional basis satisfying the above equivalent conditions will be said to have the {\em shift property}. This is a natural weakening of {\em perfect homogeneity}, i.e., that all normalised block bases are equivalent, which was shown to be just a reformulation of being equivalent to the standard unit vector basis of $c_0$ or $\ell_p$, $1\leqslant p<\infty$, by M. Zippin \cite{zippin}.  Let us also note that the shift property is stronger than what it is called the {\em block property} in \cite{kutzarova}, which is the requirement that every block sequence is equivalent with some subsequence of the basis. Finally, we remark that the shift property is obviously hereditary, that is, any normalised block basis of an unconditional basis with the shift property will itself have the shift property.

Moreover, while the canonical bases of both Tsirelson's space and its dual have the shift property, only one of them contains a {\em minimal} subspace, i.e., an infinite-dimensional subspace that embeds into all of its further infinite-dimensional subspaces. On the other hand, recall that a space $E$ is {\em locally minimal} \cite{minimal} if there is a constant $K$ such that for all finite-dimensional
$F\subseteq E$ and infinite-dimensional $X\subseteq E$, $F\sqsubseteq_KX$,
i.e., $F$ embeds with constant $K$ into $X$. As was pointed out in \cite{minimal} (Proposition 6.7), the proof of 
Theorem 14 in  \cite{casazza} essentially shows that any locally minimal space with a basis having the shift property is minimal.

The goal of the present paper is not to study the shift property per se, but rather to characterise the separable reflexive spaces that embed into a Banach space having a Schauder basis with the shift property. This will require some rather sophisticated techniques developed by E. Odell and T. Schlumprecht in a series of papers (see, e.g., \cite{haydon, odell}) and that we shall  summarise and slightly develop here. As a first application of their techniques, they characterised in \cite{odell} the separable reflexive Banach spaces embedding into an $\ell_p$-sum of finite-dimensional spaces for $1<p<\infty$ and their result was further improved in \cite{IAG} to the following statement.

\begin{thm}[see \cite{odell, IAG}]\label{OS}
Let $E$ be a separable reflexive Banach space such that any normalised weakly null tree $T$ in $E$ has a  branch $(x_i)_{i=1}^\infty\in [T]$ equivalent with all its subsequences. Then $E$ embeds into an $\ell_p$-sum, $1<p<\infty$, of finite-dimensional spaces.
\end{thm}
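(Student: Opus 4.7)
My plan would be to reduce the abstract hypothesis to the concrete existence of branches equivalent to the standard $\ell_p$-basis for some fixed $p\in(1,\infty)$, and then to apply the Odell--Schlumprecht embedding machinery. For the first step, if $(x_i)$ is a branch on which the hypothesis is witnessed, so that $(x_i)\sim_K(x_{n_i})$ for every subsequence and some constant $K$, a standard Ramsey/stability argument shows such a sequence is $K$-unconditional and $K^2$-subsymmetric, and in particular it is equivalent to its spreading model $(\tilde e_i)$.

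The second step identifies this spreading model with the unit vector basis of $\ell_p$. Krivine's theorem applied to $(\tilde e_i)$ provides some $p\in[1,\infty]$ such that the unit vector basis of $\ell_p$ (or $c_0$ when $p=\infty$) is block-finitely represented in $(\tilde e_i)$. Reflexivity of $E$ excludes $p=1$ and $p=\infty$, since $E$ contains no isomorphic copy of $\ell_1$ or $c_0$, and the strong uniform subsequence equivalence then upgrades finite block-representability to an actual equivalence between $(\tilde e_i)$ and the unit vector basis of $\ell_p$.

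The third step is to stabilise $p$ across all weakly null trees in $E$. For each $p\in(1,\infty)$ and $K\geqslant 1$, one considers the family of normalised weakly null trees admitting a branch $K$-equivalent to the $\ell_p$-basis and shows, by a Bourgain-index/Ramsey-type argument on ordinal-indexed trees together with a passage to a subspace, that for some single choice of $p$ and $K$ this family contains every normalised weakly null tree in the subspace.

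Finally, one invokes the Odell--Schlumprecht embedding theorem in the form given in \cite{IAG}: if $E$ is separable reflexive and every normalised weakly null tree in $E$ has a branch $K$-equivalent to the $\ell_p$-basis, then $E$ embeds into an $\ell_p$-sum of finite-dimensional spaces. This last step is the main technical obstacle, as it requires the full Odell--Schlumprecht coding machinery: one builds an FDD decomposition of (a suitable renorming of) $E$ and uses a pruning argument on trees indexed by large countable ordinals to promote the branch-wise $\ell_p$-estimates to a global $\ell_p$-estimate, which suffices to define the embedding of $E$ into the $\ell_p$-sum of the finite-dimensional blocks of the decomposition.
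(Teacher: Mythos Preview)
The paper does not actually prove Theorem~\ref{OS}; it is quoted as a known result from \cite{odell, IAG} and serves only as motivation for Theorem~\ref{ros}. So there is no proof in the paper to compare against, and your proposal must stand on its own.

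Your step 2 contains a genuine gap. From a single branch $(x_i)$ that is equivalent to all of its subsequences you correctly obtain a subsymmetric, unconditional sequence, and Krivine's theorem does give block finite representability of some $\ell_p$. But the assertion that ``the strong uniform subsequence equivalence then upgrades finite block-representability to an actual equivalence between $(\tilde e_i)$ and the unit vector basis of $\ell_p$'' is false: the unit vector basis of Schlumprecht's space is $1$-subsymmetric and $1$-unconditional, yet is not equivalent to any $\ell_p$ basis. Subsymmetry alone is strictly weaker than perfect homogeneity, and Krivine's theorem cannot bridge that gap for a single sequence.

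What makes the theorem true is the hypothesis on \emph{all} weakly null trees simultaneously. The route taken in \cite{IAG}, and consistent with the machinery of the present paper, is to apply the analogue of Lemma~\ref{basic} to the equivalence-invariant set
\[
\B=\big\{(x_i)_{i=1}^\infty\in S_E^\infty\;\big|\; (x_i)_{i=1}^\infty\text{ is equivalent to all its subsequences}\big\},
\]
yielding a $\Delta$-block sequence $(y_i)$ \emph{all of whose normalised block bases} are subsymmetric. Now if $(u_i)$ and $(v_i)$ are any two normalised block bases of $(y_i)$, pass to subsequences so that $u_{n_1}<v_{m_1}<u_{n_2}<v_{m_2}<\ldots$; the interleaved sequence is itself a block basis of $(y_i)$, hence subsymmetric, so $(u_{n_i})\sim(v_{m_i})$, and by subsymmetry of each side $(u_i)\sim(v_i)$. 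Thus $(y_i)$ is perfectly homogeneous, and Zippin's theorem \cite{zippin} forces $(y_i)$ to be equivalent to the unit vector basis of some $\ell_p$ (reflexivity ruling out $c_0$ and $\ell_1$). Only after this does one feed the conclusion into the Odell--Schlumprecht embedding machinery. Your steps 3 and 4 gesture at the right endgame, but without this replacement for step 2 the argument does not get off the ground.
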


The result we shall obtain here has a weaker, though similar sounding hypothesis, but its conclusion is perhaps more satisfactory, since it provides a  basis rather than a finite-dimensional decomposition.

\begin{thm}\label{ros}
Let $E$ be a separable reflexive Banach space such that any normalised weakly null tree $T$ in $E$ has a  branch $(x_i)_{i=1}^\infty\in [T]$ satisfying $(x_{2i-1})_{i=1}^\infty\sim(x_{2i})_{i=1}^\infty$. Then $E$ embeds into a reflexive space having an unconditional basis with the shift property.
\end{thm}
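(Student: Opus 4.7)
The plan is to adapt the Odell--Schlumprecht methodology that yielded Theorem \ref{OS}, modifying the output so that the ambient space carries an unconditional basis with the shift property rather than merely an $\ell_p$-FDD. The strategy has three stages: (a) use the branch hypothesis together with analytic Ramsey-type arguments to control the branches of weakly null trees in $E$ uniformly; (b) build a reflexive space $Z$ with a bimonotone FDD $(F_n)_{n=1}^\infty$, together with an isomorphic embedding $T\colon E\hookrightarrow Z$, where the norming set on $Z$ is arranged to be stable under shifts; (c) concatenate bases inside the blocks $F_n$ to form a Schauder basis for $Z$ and verify unconditionality together with condition (4) of Theorem \ref{equivalences}.

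For stage (a), I would run the analytic Ramsey theorem on the Polish space of branches of a normalised weakly null tree, applied to the Borel set of branches satisfying $(x_{2i-1})\sim_C(x_{2i})$ for a fixed $C$, to produce a uniform constant $K$ such that every normalised weakly null tree in $E$ contains a full subtree each of whose branches satisfies $(x_{2i-1})\sim_K(x_{2i})$. Iterating the pruning on pairs of trees then upgrades this to an interlacing statement resembling condition (3) of Theorem \ref{equivalences} along arbitrary interlaced pairs of branches. For stage (b), I would follow the construction in \cite{odell, IAG}: define $Z$ through a norming set $\mathcal{N}$ of tree-functionals coming from weakly null trees in $E^*$, and embed $E$ into $Z$ by sending each vector $x$ to a skipped block expansion with respect to $(F_n)$. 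To force the shift property on $Z$, one enlarges $\mathcal{N}$ so that it is closed, up to a fixed constant, under shifting of functionals across blocks of the FDD.

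For stage (c), pick Auerbach bases inside each $F_n$ and concatenate them to obtain a Schauder basis $(e_n)$ for $Z$; unconditionality is secured by closing $\mathcal{N}$ under sign changes and coordinate restrictions. The shift property in the form of condition (4) of Theorem \ref{equivalences} then follows almost formally from the shift-closure of $\mathcal{N}$: any functional nearly norming a linear combination of the shifted block basis $(y_{n+1})$ is, up to a constant, available to norm the corresponding combination of $(y_n)$, and conversely. Reflexivity of $Z$ is then standard given reflexivity of $E$ and the structure of $\mathcal{N}$, the FDD being both shrinking and boundedly complete.

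The main obstacle lies in stage (b): imposing shift-closure on $\mathcal{N}$ while preserving the isomorphic embedding of $E$. A norming set naively closed under shifts enlarges $Z$ and generically destroys $E\hookrightarrow Z$. The resolution must come from stage (a): because the interlaced odd-even hypothesis says essentially that shifted branches of weakly null trees in $E$ are equivalent to the original branches, the additional functionals introduced by the shift-closure of $\mathcal{N}$ are already controlled by the tree analysis. Calibrating precisely how much shift stability to build into $\mathcal{N}$ so as to enforce the shift property on the basis of $Z$ while retaining the copy of $E$ is where the bulk of the technical work will concentrate.
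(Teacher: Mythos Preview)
Your outline misses the central move of the paper and, as you yourself flag, leaves stage (b) unresolved. The paper does not build the ambient space via a norming set $\mathcal N$ closed under shifts. Instead, the key observation is that the hypothesis already hands you an unconditional basic sequence with the shift property \emph{inside $E$}. Concretely: Lemma \ref{basic} produces a reflexive $F\supseteq E$ with an FDD $(F_i)$ and parameters $\Gamma,M$ so that every $(u_i)\in{\rm bb}_{E,\Gamma,M}(F_i)$ has all of its normalised block sequences $(y_i)$ satisfying $(y_{2i-1})\sim(y_{2i})$, uniformly in some constant $C$. Fixing one such $(u_i)$ and varying signs shows $(u_{2i-1})$ is unconditional, and condition (5) of Theorem \ref{equivalences} then gives it the shift property. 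Set $v_i=u_{2i+1}$.

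With $(v_i)$ in hand the ambient space is built explicitly, not via a norming set. One applies the Odell--Schlumprecht ``killing the overlap'' blocking (Proposition \ref{overlap}) to the FDD $(H_i)$, where $H_i$ is a suitable blocking of $(F_j)$ adapted to the supports of the $v_i$, to obtain integers $0=a_0<a_1<\ldots$ and sets $A_i=H_{a_{i-1}+1}\oplus\ldots\oplus H_{a_i}$. The new space is the completion of ${\rm span}\bigcup A_i$ under
\[
\triple{y}=\Big\|\sum_{i=1}^\infty \|A_iy\|\,v_{a_i}\Big\|,
\]
and the map $x\mapsto\sum_i A_ix$ is shown to be an isomorphic embedding of $E$ by comparing $\|A_ix\|$ with $\|x_i\|,\|x_{i+1}\|$ from the decomposition of Proposition \ref{overlap}, and then using that $(x_i/\|x_i\|)_{i\in D}$ interlaces with a subsequence of $(v_i)$ to form a $\Gamma$-block sequence, hence is $C$-equivalent to $(v_{b_i})_{i\in D}$. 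Finally, to pass from this FDD to a basis with the shift property, each $A_i$ is replaced by $\ell_\infty^{k_i}$ (into which $A_i$ $2$-embeds) and the concatenated unit vector bases are shown, by a short case analysis, to satisfy condition (6) of Theorem \ref{equivalences}.

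So the ingredient you are missing is not a shift-stable norming set at all, but rather: extract the shift-property sequence $(v_i)$ from $E$ directly, and then use it as the scalar sequence governing a $(v_i)$-sum of finite-dimensional blocks. Your stage (b), as written, is attempting to manufacture the shift property abstractly where the hypothesis already provides it concretely; this is why you cannot see how to reconcile shift-closure of $\mathcal N$ with the embedding of $E$.
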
 

If the reader is not familiar with the techniques of Odell and Schlumprecht, this  should not be a hindrance to understanding the present construction, as we shall take certain of their technical results as black boxes that are directly applicable in our situation.

Without further introduction, let us commence the technical part of the paper by proving Theorem \ref{equivalences} for the record and the convenience of the reader.

\begin{proof}
The implication (1)$\saa$(2) follows directly from Lemma 9.4.1 in \cite{albiac}, so we shall not repeat the proof here.

(2)$\saa$(3): Suppose (2) holds and $(x_n)_{n=1}^\infty$ and $(y_n)_{n=1}^\infty$ are normalised block sequences satisfying
$$
x_1<y_1<x_2<y_2<\ldots.
$$
Assume that $a_n$ are scalars such that $\sum_{n=1}^\infty a_nx_n$ converges and choose $s_n>0$ converging to $0$ such that also 
$\sum_{n=1}^\infty  \frac{a_n}{s_n} x_n$ converges. Put $w_n=x_n+s_ny_n$ and find $w_n^*\in X^*$ such that ${\rm supp}\;w_n^*\subseteq {\rm supp}\;w_n$ and 
$$
Pz=\sum_{n=1}^\infty w_n^*(z)w_n
$$
defines a bounded projection onto $[w_n]_{n=1}^\infty$, whence $\sup\norm{w_n^*}<\infty$. Then
$$
P\big(\sum_{n=1}^\infty  \frac{a_n}{s_n}x_n\big)=\sum_{n=1}^\infty  \frac{a_n}{s_n}P(x_n)=\sum_{n=1}^\infty  \frac{a_n}{s_n}w_n^*(x_n)w_n=\sum_{n=1}^\infty  \frac{a_n}{s_n}w_n^*(x_n)(x_n+s_ny_n)
$$
and so the last series is norm convergent. By unconditionality, it follows that the series $\sum_{n=1}^\infty a_nw_n^*(x_n)y_n$ is norm convergent too. Thus, as 
$$
w_n^*(x_n)=w_n^*(w_n)-w^*_n(s_ny_n)=1-s_nw_n^*(y_n)\Lim{n\til \infty}1,
$$
using unconditionality again, we find that also $\sum_{n=1}^\infty a_ny_n$ is norm convergent. A symmetric argument shows that if  $\sum_{n=1}^\infty a_ny_n$ converges, then so does  $\sum_{n=1}^\infty a_nx_n$, whence $(x_n)_{n=1}^\infty$ and $(y_n)_{n=1}^\infty$ are equivalent.

(3)$\saa$(4): Assume that (3) holds and that $(x_n)_{n=1}^\infty$ is a normalised block sequence. Then using (3) 
$$
(x_{2n-1})_{n=1}^\infty\sim (x_{2n})_{n=1}^\infty\sim (x_{2n+1})_{n=1}^\infty.
$$
By unconditionality, it follows that the sequence $(x_n)_{n=1}^\infty$, which is the disjoint union of the sequences $(x_{2n-1})_{n=1}^\infty$ and  $(x_{2n})_{n=1}^\infty$, is equivalent to the sequence $(x_{n+1})_{n=1}^\infty$, which itself is the disjoint union of the sequences $(x_{2n})_{n=1}^\infty$ and  $(x_{2n+1})_{n=1}^\infty$.

(4)$\saa$(5): If $(x_i)_{i=1}^\infty$ and $(y_i)_{i=1}^\infty$ are normalised block sequences such that 
$$
\max({\rm supp}\; x_i\cup {\rm supp}\; y_i)<\min({\rm supp}\; x_{i+1}\cup {\rm supp}\; y_{i+1}),
$$
then both $x_1,y_2,x_3,y_4,\ldots$ and $x_2,y_3,x_4,y_5,\ldots$ are normalised block sequences, whence $(x_{2i-1})_{i=1}^\infty\sim(y_{2i})_{i=1}^\infty$ and $(x_{2i})_{i=1}^\infty\sim(y_{2i+1})_{i=1}^\infty$. By unconditionality, it follows that $(x_i)_{i=1}^\infty\sim(y_{i+1})_{i=1}^\infty\sim (y_i)_{i=1}^\infty$.

(5)$\saa$(6): Trivial.

(6)$\saa$(1): If (6) holds, then it does so uniformly, that is, there is a constant $C$ such that $(x_n)_{n=1}^\infty\sim_C(e_{k_{n}})_{n=1}^\infty$ whenever $(x_n)_{n=1}^\infty$ is a normalised block basis and $k_n\in {\rm supp}\;x_n$.
This can easily be seen, as otherwise one would be able to piece together finite bits of sequences  with worse and worse constants of equivalence to get a counter-example to (6). Let also $K_u$ be the constant of unconditionality of $(e_n)_{n=1}^\infty$.

Suppose $(x_n)_{n=1}^\infty$ is a normalised block sequence and let $I_1<I_2<I_3<\ldots$ be a partition of $\N$ into successive finite intervals such that ${\rm supp}\;x_n\subseteq I_n$. Find also norm $1$ functionals $x_n^*\in X^*$ such that ${\rm supp}\; x_n^*\subseteq {\rm supp}\; x_n$ and $x_n^*(x_n)=1$. We claim that
$$
P(z)=\sum_{n=1}^\infty x_n^*(z)x_n
$$
defines a projection of norm $\leqslant K_uC^2$ from $X$ onto $[x_n]_{n=1}^\infty$. To see this, suppose $z\in X$ and write $z=\sum_{n=1}^\infty a_nz_n$, where the $z_n$ are normalised block vectors such that ${\rm supp}\;z_n\subseteq I_n$. Modulo, perturbing $x_n$ and $z_n$ ever so slightly to get ${\rm supp}\; x_n=I_n={\rm supp}\; z_n$ and picking $k_n\in I_n$, we see that $(x_n)_{n=1}^\infty\sim_C(e_{k_n})_{n=1}^\infty\sim_C(z_n)_{n=1}^\infty$. So $\sum_{n=1}^\infty a_n x_n$ converges and, by unconditionality, so does $\sum_{n=1}^\infty x_n^*(z_n)a_n x_n=\sum_{n=1}^\infty x_n^*(z) x_n$. Therefore, $P$ is defined and satisfies
\[\begin{split}
\norm{P(z)}&=\norm{\sum_{n=1}^\infty x_n^*(z) x_n}=\norm{\sum_{n=1}^\infty x_n^*(z_n) a_nx_n}\\
&\leqslant K_u\norm{\sum_{n=1}^\infty a_nx_n}\leqslant K_uC^2\norm{\sum_{n=1}^\infty a_nz_n}=K_uC^2\norm{z},
\end{split}\]
proving the estimate on the norm.
\end{proof}

Finally, let us also remark that unconditionality is already implied by conditions (4), (5) and (6) of Theorem \ref{equivalences}. E.g., if a normalised basis  $(e_n)_{n=1}^\infty$ satisfies (4) and
$(\theta_n)_{n=1}^\infty\in\{-1,1\}^\infty$, then 
$$
(e_1,\theta_1
e_2,\theta_1\theta_2e_3,\theta_1\theta_2\theta_3e_4,\ldots)\sim (\theta_1
e_2,\theta_1\theta_2e_3,\theta_1\theta_2\theta_3e_4,\theta_1\theta_2\theta_3\theta_4e_5,\ldots),
$$
and therefore, multiplying both sides with $(\theta_1,\theta_1\theta_2,\theta_1\theta_2\theta_3,\theta_1\theta_2\theta_3\theta_4,\ldots)$, we have
$$
(\theta_1 e_1,\theta_2e_2,\theta_3e_3,\theta_4e_4,\ldots)\sim (e_2,e_3,e_4,e_5,\ldots)\sim(e_1,e_2,e_3,e_4,\ldots).
$$
Since $(\theta_n)_{n=1}^\infty\in\{-1,1\}^\infty$ is arbitrary, this shows that $(e_n)_{n=1}^\infty$ is unconditional.

Before continuing with the proof of Theorem \ref{ros}, let us note that, while Theorem \ref{ros} characterises reflexive spaces embeddable into a space with a basis having the shift property, we do not know of any significant characterisation of the spaces containing a basic sequence with the shift property. Using W. T. Gowers' block Ramsey theorem from \cite{gowers} and Lemma 6.4 of \cite{minimal}, we can conclude that if $X$ is a Banach space with a Schauder basis $(e_n)_{n=1}^\infty$, then $X$ contains a normalised block sequence $(y_n)_{n=1}^\infty$ that either is unconditional and has the shift property or such that there is  a non-empty tree $T$ consisting of finite normalised block sequences of $(y_n)_{n=1}^\infty$ with the following property:
\begin{itemize}
\item[(a)] if $(z_1,\ldots,z_m)\in T$ and $Z$ is a block subspace of $[y_n]_{n=1}^\infty$, then there is $z\in Z$ such that $(z_1,\ldots,z_m, z)\in T$, and
\item[(b)] if $(z_1,z_2,z_3,\ldots)$ is an infinite branch of $T$, then $(z_{2n-1})_{n=1}^\infty\not \sim(z_{2n})_{n=1}^\infty$.
\end{itemize}
However, it is not clear what can be concluded from the existence of such a tree $T$ and one would like to draw stronger or more informative consequences from this.
\begin{prob}
Formulate and prove a dichotomy that characterises the Banach spaces containing a unconditional basis sequence with the shift property.
\end{prob}


\section{Subspaces of spaces with an F.D.D.}\label{subspaces of
spaces with FDD's}

We fix in the following Banach spaces $E\subseteq F$ and an F.D.D. $(F_i)_{i=1}^\infty$ of $F$. For each interval $I\subseteq \N$, we let $I(x)$  denote the canonical projection of  $x\in F$
onto the subspace $\sum_{i\in I}F_i$ and shall also sometimes write
$[\sum_{i\in I}F_i](x)$ for $I(x)$ if there is any chance of confusion. So, if $K$ denotes the constant of the decomposition $(F_i)_{i=1}^\infty$, then $\norm{I}\leqslant 2K$ for any interval $I\subseteq \N$.

Fixing notation, if $A$ is a set, we let $A^\infty$ denote the set of all infinite sequences $(a_i)_{i=1}^\infty$ of elements of $A$ and let $A^{<\infty}$ denote the set of all finite sequences $(a_1,\ldots,a_n)$ of elements of $A$, including the empty sequence $\tom$. A {\em tree} on $A$ is a subset $T\subseteq A^{<\infty}$ closed under initial segments, i.e., such that  $(a_1,\ldots,a_n)\in T$ implies that $(a_1,\ldots,a_m)\in T$ for all $m\leqslant n$. When $T$ is a tree on $A$, we let $[T]$ denote the set of all {\em infinite branches} of $T$, i.e., the set of all sequences $(a_i)_{i=1}^\infty$ such that $(a_1,\ldots,a_{n})\in T$ for all $n$.

To simplify notation, if $\Delta=(\delta_i)_{i=1}^\infty$  
is a decreasing sequence of real numbers $\delta_i>0$ tending to $0$, we will denote this simply by $\Delta\searrow0$.
Similarly, if $M=(m_i)_{i=1}^\infty$ is a strictly increasing sequence of natural numbers, we shall denote this by $M\nearrow\infty$.

If $\B\subseteq S_E^\infty$ is a set of normalised sequences in $E$, we let 
$$
\B_\Delta=\big\{(x_i)_{i=1}^\infty\in S_E^\infty\del \e (y_i)_{i=1}^\infty\in \B\; \a i\; \norm{x_i-y_i}<\delta_i\big\}
$$
and
$$
{\rm Int}_\Delta(\B)=\big\{(x_i)_{i=1}^\infty\in S_E^\infty\del \a (y_i)_{i=1}^\infty\in S_E^\infty\; \big(\a i\; \norm{x_i-y_i}<\delta_i\til (y_i)_{i=1}^\infty\in  \B\big)\big\},
$$
and note that ${\rm Int}_\Delta(\B)=\;\sim\!(\sim \B)_\Delta$, where the complement is taken with respect to $S_E^\infty$.

\begin{defi}\label{delta-block}
Given $\Delta\searrow0$, a normalised sequence $(x_i)_{i=1}^\infty\in S_E^\infty$ is said to be
a $\Delta$-{\em block sequence} if there are intervals $I_i\subseteq \N$ such that
$$
I_1<I_2<I_3<\ldots
$$
and for every $i$,
$$
\|{I_i}(x_i)-x_i\|<\delta_i.
$$
Moreover, if $M\nearrow \infty$, we say that $(x_i)_{i=1}^\infty$ is {\em $M$-separated} if the witnesses $I_i\subseteq \N$ can be chosen such that 
$$
m_1<I_1\quad \&\quad \a i\;\e j\;\;\; I_i<m_j<m_{j+1}<I_{i+1}.
$$

We let ${\rm bb}_{E,\Delta}(F_i)$ denote the set of  $\Delta$-block sequences in $E$ and let ${\rm bb}_{E,\Delta, M}(F_i)$ denote the set of $M$-separated $\Delta$-block sequences in $E$.
\end{defi}

We notice that if $K$ is the constant of the decomposition $(F_i)_{i=1}^\infty$ and $(x_i)_{i=1}^\infty$ and $(y_i)_{i=1}^\infty$ are normalised sequences such that $ \|x_i-y_i\|<\delta_i$ for all $i$, then if $(x_i)_{i=1}^\infty$
is a $\Delta$-block, $(y_i)_{i=1}^\infty$ is a $4K\Delta$-block (with the same sequence of witnesses $I_1<I_2<\ldots$).

Also, since $\Delta\searrow0$ is a decreasing sequence, the sets  ${\rm bb}_{E,\Delta}(F_i)$ and  ${\rm bb}_{E,\Delta, M}(F_i)$ are closed under taking subsequences, that is, if $(x_i)_{i=1}^\infty\in {\rm bb}_{E,\Delta, M}(F_i)$, as witnessed by a sequence $(I_i)_{i=1}^\infty$, and $A\subseteq \N$, then $(I_i)_{i\in A}$ witnesses that $(x_i)_{i\in A}\in {\rm bb}_{E,\Delta, M}(F_i)$. Lemma \ref{all blocks} below essentially improves this to closure under taking normalised block sequences.

\begin{lemme}\label{interior}
Suppose $E$ is a subspace of a space $F$ with an F.D.D. $(F_i)_{i=1}^\infty$. Let
$\B\subseteq  S_E^\infty$ be a set of sequences invariant under equivalence.
Then there is a $\Delta\searrow0$ such that
$$
\B_{\Delta}\cap {\rm bb}_{E,\Delta}(F_i)\subseteq {\rm Int}_{\Delta}(\B).
$$
\end{lemme}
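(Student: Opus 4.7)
The approach is a standard ``small perturbation'' argument for F.D.D.\ block sequences. Recall the principle: if $K$ is the F.D.D.\ constant of $(F_i)$, $(u_i)$ is a normalised block sequence of $(F_i)$, and $(w_i)\subseteq S_E$ satisfies $\sum_i \|u_i-w_i\|$ sufficiently small (depending only on $K$), then $(u_i)\sim(w_i)$ with a uniformly bounded equivalence constant. The plan is to choose $\Delta=(\delta_i)\searrow 0$ summable with $\sum_i\delta_i$ small enough that this principle applies, with room to spare, to any perturbation of total size $\leqslant 4\sum_i\delta_i$.

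Given such a $\Delta$, fix $(x_i)\in \B_\Delta\cap {\rm bb}_{E,\Delta}(F_i)$. By definition there is $(y_i)\in\B$ with $\|x_i-y_i\|<\delta_i$ for all $i$, and intervals $I_1<I_2<\ldots$ witnessing that $(x_i)$ is a $\Delta$-block, i.e., $\|I_i(x_i)-x_i\|<\delta_i$. Setting $v_i=I_i(x_i)$ we get a genuine (unnormalised) block sequence of $(F_i)$ with $|\,\|v_i\|-1\,|<\delta_i$, so the normalisations $u_i=v_i/\|v_i\|$ form a true normalised F.D.D.-block satisfying $\|u_i-x_i\|<2\delta_i$.

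Now let $(z_i)\in S_E^\infty$ be any sequence with $\|x_i-z_i\|<\delta_i$; I must show $(z_i)\in\B$. By the triangle inequality $\|u_i-y_i\|<3\delta_i$ and $\|u_i-z_i\|<3\delta_i$, and by the choice of $\Delta$ both resulting sums $\sum_i\|u_i-y_i\|$ and $\sum_i\|u_i-z_i\|$ fall in the regime of the small-perturbation principle. Hence $(u_i)\sim(y_i)$ and $(u_i)\sim(z_i)$, so $(y_i)\sim(z_i)$. Since $\B$ is invariant under equivalence and $(y_i)\in\B$, we conclude $(z_i)\in\B$, which establishes $(x_i)\in{\rm Int}_\Delta(\B)$.

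The only genuine ingredient is the quantitative small-perturbation lemma for F.D.D.-blocks, which is entirely standard: one estimates $|a_i|\leqslant 2K\|\sum_j a_j u_j\|$ via successive projections $(I_i-I_{i-1})$ onto the $u_i$-band, then bounds $\|\sum a_i(u_i-w_i)\|\leqslant 2K\bigl(\sum_i\|u_i-w_i\|\bigr)\cdot\|\sum a_j u_j\|$, and closes the argument by a Neumann-series / invertible-perturbation trick once $2K\sum_i\|u_i-w_i\|<1/2$. This fixes the required threshold for $\Delta$, and since that threshold depends only on $K$, the same $\Delta$ works for every equivalence-invariant $\B\subseteq S_E^\infty$. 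Everything else is bookkeeping.
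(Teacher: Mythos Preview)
Your argument is correct and is essentially the same as the paper's own proof: normalise $I_i(x_i)$ to obtain a genuine normalised block sequence, use the triangle inequality to place both the $\B$-witness and the arbitrary $\Delta$-perturbation within small-perturbation range of that block, and conclude equivalence via the standard principle together with the equivalence-invariance of $\B$. The only cosmetic difference is that the paper phrases the perturbation threshold pointwise (``$\|v_i-y_i\|<5\delta_i$ for all $i$ implies $(v_i)\sim(y_i)$'') rather than via summability of $\Delta$, but these amount to the same thing.
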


\begin{proof}Pick a $\Delta\searrow0$ depending on the constant of the decomposition $(F_i)_{i=1}^\infty$ such that if $(y_i)_{i=1}^\infty$ is a normalised block sequence in $F$ and $(v_i)_{i=1}^\infty$ is a sequence in $F$ satisfying $\norm{v_i-y_i}<5\delta_i$ for all $i$, then $(v_i)_{i=1}^\infty\sim (y_i)_{i=1}^\infty$. Assume also that $\delta_i>2\delta_i^2$ for every $i$.

Now, suppose $(x_i)_{i=1}^\infty\in\B_{\Delta}\cap {\rm bb}_{E,\Delta}(F_i)$ and let $(u_i)_{i=1}^\infty$  be a normalised sequence in $E$ such that $\norm{x_i-u_i}<\delta_i$ for all $i$. We must show that $(u_i)_{i=1}^\infty\in \B$, which will imply that $(x_i)_{i=1}^\infty\in {\rm Int}_\Delta(\B)$.

By assumption on $(x_i)_{i=1}^\infty$, we can find $(z_i)_{i=1}^\infty\in \B$ and intervals $I_1<I_1<\ldots$ such that $\norm{x_i-z_i}<\delta_i$  and $\norm {{I_i}(x_i)-x_i}<\delta_i$ for all $i$. Letting $y_i=\frac{{I_i}(x_i)}{\norm{{I_i}(x_i)}}$, we see that $(y_i)_{i=1}^\infty$ is a normalised block sequence in $F$ and a simple calculation using $\delta_i>2\delta_i^2$ gives $\norm{x_i-y_i}<4\delta_i$, whence $\norm{u_i-y_i}<5\delta_i$ and $\norm{z_i-y_i}<5\delta_i$. It follows that 
$(u_i)_{i=1}^\infty\sim (y_i)_{i=1}^\infty\sim (z_i)_{i=1}^\infty\in \B$ and so also $(u_i)_{i=1}^\infty\in \B$.
\end{proof}

\begin{lemme}\label{all blocks}
Suppose $E$ is a subspace of a space $F$ with an F.D.D. $(F_i)_{i=1}^\infty$ and $\Theta\searrow0$. Then there is $\Gamma\searrow0$ such that for any $M\nearrow0$ and $(x_i)_{i=1}^\infty\in  {\rm bb}_{E,\Gamma, M}(F_i)$,
\begin{enumerate}
\item $(x_i)_{i=1}^\infty$ is a normalised basic sequence, and
\item any normalised block sequence $(z_i)_{i=1}^\infty$ of $(x_i)_{i=1}^\infty$ belongs to ${\rm bb}_{E,\Theta,M}(F_i)$.
\end{enumerate}
\end{lemme}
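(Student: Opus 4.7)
My plan is to choose $\Gamma=(\gamma_i)_{i=1}^\infty$ decreasing fast enough so that two requirements hold simultaneously: first, every $\Gamma$-block sequence in $E$ is a sufficiently small perturbation of an honest normalised block sequence of $(F_i)_{i=1}^\infty$ to inherit basicity via the classical small-perturbation lemma with a basis constant depending only on $K$; and second, the tail sums $\sum_{i\geqslant j}\gamma_i$ are dominated by $\theta_j$ divided by a constant depending only on $K$ and this basis constant.

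For (1), given $(x_i)_{i=1}^\infty \in {\rm bb}_{E,\Gamma,M}(F_i)$ with witnessing intervals $I_1<I_2<\ldots$, I set $y_i = I_i(x_i)/\|I_i(x_i)\|$. A short computation identical to the one in the proof of Lemma \ref{interior} gives $\|x_i-y_i\|<4\gamma_i$, and $(y_i)_{i=1}^\infty$ is a genuine normalised block sequence of $(F_i)$, hence basic with constant at most $2K$. Provided $\sum_i\gamma_i$ is small enough in terms of $K$, the small-perturbation principle then ensures that $(x_i)_{i=1}^\infty$ is itself basic with some constant $C=C(K)$.

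For (2), let $(z_j)_{j=1}^\infty$ be a normalised block sequence of $(x_i)_{i=1}^\infty$, written $z_j=\sum_{i\in A_j}a_ix_i$ with $A_1<A_2<\ldots$ finite subsets of $\N$. I define $J_j$ to be the smallest interval of $\N$ containing $\bigcup_{i\in A_j}I_i$. Then $J_1<J_2<\ldots$, and since the $M$-separation of $(x_i)_{i=1}^\infty$ places $m_1$ below $I_1$ and, for $i=\max A_j$, some pair $m_k<m_{k+1}$ strictly between $I_i$ and $I_{i+1}\subseteq [\min J_{j+1},\infty)$, the same $m_k,m_{k+1}$ witness $M$-separation of $(J_j)$. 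For the norm estimate, the inclusion $I_i\subseteq J_j$ for $i\in A_j$ forces $J_j(I_i(x_i))=I_i(x_i)$, so $(I-J_j)(x_i)=(I-J_j)(x_i-I_i(x_i))$ has norm at most $(1+2K)\gamma_i$; combining this with the coefficient bound $|a_i|\leqslant 2C$ coming from (1) yields
$$
\|z_j-J_j(z_j)\| \leqslant 2C(1+2K)\sum_{i\in A_j}\gamma_i \leqslant 2C(1+2K)\sum_{i\geqslant j}\gamma_i,
$$
which is less than $\theta_j$ by the chosen decay of $\Gamma$.

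The only real subtlety I foresee is an apparent circularity in (2): the bound $|a_i|\leqslant 2C$ relies on a basis constant $C$ for $(x_i)$ that itself depends on $\Gamma$. This is resolved by picking $\Gamma$ in two stages — first shrinking it enough in terms of $K$ alone to secure a uniform $C=C(K)$ from the perturbation lemma, and only then shrinking the tail of $\Gamma$ further in terms of $\Theta$, $K$, and $C$ so as to beat $\theta_j$ in the final estimate.
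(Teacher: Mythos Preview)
Your proposal is correct and follows essentially the same route as the paper: define $J_j$ as the interval spanned by the witnessing $I_i$ with $i\in A_j$, bound $\|(I-J_j)x_i\|$ by a constant times $\gamma_i$ via $I_i\subseteq J_j$, pull the coefficient bound from the basicity obtained in part (1), and sum. The paper phrases (1) as $2$-equivalence of $(x_i)$ with $(I_ix_i/\|I_ix_i\|)$ rather than invoking a generic perturbation constant $C(K)$, which makes the coefficient bound $\sup|a_i|\leqslant 4K\|z\|$ explicit and sidesteps your two-stage choice of $\Gamma$, but the substance is identical; one small wording slip is that $I_{i+1}\subseteq[\min J_{j+1},\infty)$ need not hold when $i+1<\min A_{j+1}$, though what you actually need---$m_{k+1}<\min I_{i+1}\leqslant\min J_{j+1}$---does.
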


\begin{proof}Let $K$ be the constant of the decomposition $(F_i)_{i=1}^\infty$. As in the proof of Lemma \ref{interior}, there is some $\Lambda\searrow0$ such that if $(x_i)_{i=1}^\infty\in {\rm bb}_{E,\Lambda}(F_i)$, as witnessed by a sequence of intervals $(I_i)_{i=1}^\infty$, then
$$
(x_i)_{i=1}^\infty\sim_2\Big(\frac{I_ix_i}{\norm{I_ix_i}}\Big)_{i=1}^\infty.
$$
Let now $\Gamma\searrow0$ be chosen such  that
$12K^2\sum_{i=m}^\infty\gamma_i<\theta_m$ and $\gamma_m<\lambda_m$ for all $m$.

Now suppose $(x_i)_{i=1}^\infty\in  {\rm bb}_{E,\Gamma, M}(F_i)$ for some $M\nearrow \infty$, as witnessed by a sequence of intervals
$(I_i)_{i=1}^\infty$. Then $(x_i)_{i=1}^\infty\in  {\rm bb}_{E,\Lambda}(F_i)$ and hence is $2$-equivalent to the normalised block basis $\Big(\frac{I_ix_i}{\norm{I_ix_i}}\Big)_{i=1}^\infty$, whence $(x_i)_{i=1}^\infty$ is itself a basic sequence.

Suppose also that $z=\sum_{i=n}^ma_ix_i$ is a block vector. We claim
that if we let $J=[\min I_n,\max I_m]$, then
$$
\norm{Jz-z}<\theta_n\norm{z},
$$
which is enough to obtain condition (2). To see this, notice first that for $i=n,\ldots m$,
\begin{displaymath}\begin{split}
\norm{Jx_i-x_i}
=&\Norm{[1,\min I_n-1](x_i)+[\max I_m+1,\infty[(x_i)}\\
=&\Norm{[1,\min I_n-1](x_i-I_ix_i)+[\max I_m+1,\infty[(x_i-I_ix_i)}\\
\leqslant&\Norm{[1,\min I_n-1](x_i-I_ix_i)}+\Norm{[\max I_m+1,\infty[(x_i-I_ix_i)}\\
\leqslant& K\norm{x_i-I_ix_i}+2K\norm{x_i-I_ix_i}\\
<&3K\gamma_i.
\end{split}\end{displaymath}
Since $\Norm{P_{I_i}}\leqslant 2K$ and $(x_i)_{i=1}^\infty$ is $2$-equivalent to $\Big(\frac{I_ix_i}{\norm{I_ix_i}}\Big)_{i=1}^\infty$, we have
$$
\sup_{n\leqslant i\leqslant m}|a_i|
=\sup_{n\leqslant i\leqslant m}\Norm{a_i\frac{I_ix_i}{\norm{I_ix_i}}}
\leqslant 2K\Norm{\sum_{i=n}^ma_i\frac{I_ix_i}{\norm{I_ix_i}}}
\leqslant 4K\Norm{\sum_{i=n}^ma_ix_i},
$$
and therefore
\begin{displaymath}\begin{split}
\Norm{J(\sum_{i=n}^ma_ix_i)-(\sum_{i=n}^ma_ix_i)}
=&\Norm{\sum_{i=n}^ma_i(Jx_i-x_i)}\\
\leqslant&\sum_{i=n}^m|a_i|\;\norm{Jx_i-x_i}\\
<& \sup_{n\leqslant i\leqslant m}|a_i|\cdot \sum_{i=n}^m3K\gamma_i\\
\leqslant& 12K^2\Norm{\sum_{i=n}^ma_ix_i}\sum_{i=n}^m\gamma_i\\
\leqslant&\theta_n\|\sum_{i=n}^ma_ix_i\|,
\end{split}\end{displaymath}
which shows that $\norm{Jz-z}<\theta_n\norm z$.
\end{proof}

\begin{defi}
Given $\Delta\searrow0$, a $\Delta$-{\em block tree} $T$ is a non-empty tree on $S_E$ such that for all
$(x_1,\ldots,x_{n})\in T$ the set
$$
\{y\in S_E\del (x_1,\ldots,x_{n},y)\in T\}
$$
can be written as $\{y_i\}_{i=0}^\infty$, where for each $i$ there is an interval $I_i\subseteq \N$ satisfying
\begin{itemize}
\item $\norm{I_iy_i-y_i}<\delta_{n+1}$,
\item $\min I_i\Lim{i\til \infty}\infty$.
\end{itemize}
\end{defi}

Now, an easy inductive construction shows that any $\Delta$-block tree $T$ contains a subtree $T'\subseteq T$ such that any infinite branch in $T'$ is a $\Delta$-block sequence, i.e., $[T']\subseteq {\rm bb}_{E,\Delta}(F_i)$.
So, without loss of generality, we can always assume that any $\Delta$-block tree satisfies this additional hypothesis.

We recall the following result from \cite{IAG}, which is proved using infinite-dimensional Ramsey theory. A similar statement for closed sets was proved earlier by Odell and Schlumprecht in \cite{odell}.

\begin{thm}\label{strategy for I}
Let $\B\subseteq S_E^\infty$ be a  coanalytic set. Then the following
are equivalent.
\begin{enumerate}
\item $\e \Delta\searrow0\;\;\; \e M\nearrow\infty\quad  {\rm bb}_{E,\Delta,M}(F_i)\subseteq   {\rm Int}_\Delta(\B)$,
\item $\e \Delta\searrow0$  such that any $\Delta$-block tree has a branch in ${\rm Int}_\Delta(\B)$.
\end{enumerate}
\end{thm}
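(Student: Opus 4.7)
The plan is to handle the two directions separately. Implication (1)$\saa$(2) is direct and constructive, while (2)$\saa$(1) is the substantive one, for which I would invoke the infinite-dimensional Ramsey theorem of \cite{IAG} as a black box, together with a standard game-theoretic setup.

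For (1)$\saa$(2), I would reuse the $\Delta$ and $M$ supplied by (1). Given any $\Delta$-block tree $T$, the remark following the definition allows us to pass to a subtree all of whose infinite branches are $\Delta$-block sequences, so it suffices to produce an $M$-separated branch. I would build one by greedy descent: at a node $(x_1,\ldots,x_n)\in T$ with witnessing intervals $I_1<\cdots<I_n$, the definition of a $\Delta$-block tree forces the children's witness intervals to have left endpoints tending to $\infty$, so I may pick a child with $\min I_{n+1}>m_{j+1}>m_{j}>\max I_n$ for a suitable $j$. The resulting infinite branch lies in ${\rm bb}_{E,\Delta,M}(F_i)\subseteq{\rm Int}_\Delta(\B)$ by the assumption (1).

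For (2)$\saa$(1), I would argue contrapositively. If (1) fails, then for every $\Delta\searrow0$ and every $M\nearrow\infty$ there is an $M$-separated $\Delta$-block sequence lying in $(\sim\B)_\Delta$. I would formulate an asymptotic block-sequence game of Gowers type relative to $(F_i)_{i=1}^\infty$, in which Player I plays increasing cutoffs $N_n\in\N$ and Player II responds with normalised $x_n\in E$ satisfying $\|[N_n,\infty[(x_n)-x_n\|<\delta_n$, and apply the Ramsey theorem of \cite{IAG} to the target set ${\rm Int}_\Delta(\B)$. Since $\B$ is coanalytic and the operators $\B\mapsto\B_\Delta$ and $\B\mapsto{\rm Int}_\Delta(\B)$ preserve the relevant projective pointclass (being compositions of continuous images and preimages), the definability hypothesis of that theorem is met. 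Its dichotomy then yields either a winning strategy for Player I, which after a diagonalisation over finite play-histories produces a uniform $M\nearrow\infty$ such that every $M$-separated $\Delta$-block sequence is forced into ${\rm Int}_\Delta(\B)$, giving (1); or a winning strategy for Player II whose legal responses at each node define a $\Delta$-block tree with every branch in $(\sim\B)_\Delta$, directly contradicting (2).

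The principal obstacle, and the reason this is a genuine theorem rather than a bookkeeping exercise, is the Ramsey/determinacy input itself. For closed $\B$ one can argue by a direct tree-pruning construction, as Odell and Schlumprecht did in \cite{odell}; but the coanalytic case requires the stronger infinite-dimensional Ramsey theorem of \cite{IAG}, which in turn rests on an analytic-determinacy style dichotomy transferred into the block-subspace framework. I would therefore cite that theorem in black-box form and devote the remainder of the proof to verifying the translation between winning strategies and quantifier-uniform witnesses, in particular checking that a Player I strategy can be taken to depend only on the finite sequence of cutoffs played so far, so that a single $M\nearrow\infty$ suffices for all legal continuations.
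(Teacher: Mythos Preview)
The paper does not actually prove this theorem: it is stated with the preamble ``We recall the following result from \cite{IAG}, which is proved using infinite-dimensional Ramsey theory,'' and no argument is given. So there is no in-paper proof to compare your proposal against; the paper simply imports the statement as a black box from \cite{IAG}.

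That said, your outline is faithful to how the result is obtained in \cite{IAG}. The direction (1)$\Rightarrow$(2) is indeed a straightforward greedy construction along the tree, exactly as you describe. For (2)$\Rightarrow$(1), the substance really is the asymptotic-game determinacy/Ramsey theorem of \cite{IAG}, and your game-theoretic setup is the correct framework. Two technical points you would need to make precise if you actually wrote this out: first, a winning strategy for Player~II is indexed by full play-histories (including I's cutoffs $N_1,\ldots,N_n$), not merely by the vector-sequence $(x_1,\ldots,x_n)$, so one must check that collapsing to a tree on $S_E$ still yields, at each node, a set of children whose witness intervals can be taken with $\min I_i\to\infty$ --- this is arranged by letting $N_{n+1}$ range over $\N$, but it deserves a sentence. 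Second, your ``diagonalisation over finite play-histories'' converting a Player~I strategy into a single $M\nearrow\infty$ is the genuine content on that side of the dichotomy; it works because I's moves are natural numbers and one can take suprema over the (countably many) finite histories of a given length, but again this is not entirely automatic and is carried out explicitly in \cite{IAG}.
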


\begin{defi}
A {\em weakly null tree} is a tree $T$ on $S_E$ such that, for any $(x_1,\ldots,x_{n})\in T$, the set
$$
\{y\in S_E\del (x_1,\ldots,x_{n},y)\in T\}
$$
can be written as $\{y_i\}_{i=1}^\infty$ for some weakly null sequence $(y_i)_{i=1}^\infty$.
\end{defi}

We recall also a statement from \cite{IAG} that sums up some of the elements of the construction of Odell and Schluprecht from \cite{odell}
that we shall use in the following.

\begin{prop}\label{wbjohnson}
Let $E$ be a separable reflexive Banach space. Then there is a reflexive Banach space
$F\supseteq E$ having an F.D.D. $(F_i)_{i=1}^\infty$  and a constant $c>1$ such
that whenever $\Delta\searrow0$ and $T$ is a $\Delta$-block
tree in $S_E$ with respect to $(F_i)_{i=1}^\infty$, there is a weakly null tree $S$ in $S_E$ such that
$$
[S]\subseteq [T]_{\Delta c}\quad\&\quad[T]\subseteq [S]_{\Delta c}.
$$
\end{prop}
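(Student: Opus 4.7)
The plan is to realize $E$ as a closed subspace of a reflexive space $F$ with an F.D.D. $(F_i)_{i=1}^\infty$, using the embedding theorem of W.\,B.\ Johnson (refining Zippin) in the specific form developed by Odell and Schlumprecht in \cite{odell}; that construction is tailored so that vectors in $E$ are well approximated in norm by elements with finite support in the F.D.D., and this approximation is what makes a uniform constant $c>1$ work. Reflexivity of $F$ forces $(F_i)$ to be shrinking, so any bounded block sequence of $(F_i)$ converges weakly to zero in $F$. This is the central structural fact I would exploit: for any $\Delta$-block sequence $(y_i)$ in $S_E$ with witnesses $I_i\subseteq\N$ satisfying $\min I_i\to\infty$, the block vectors $I_iy_i$ converge weakly to $0$ in $F$, and since $\norm{y_i-I_iy_i}<\delta_i\to 0$, every weak cluster point of $(y_i)$ in the reflexive space $E$ has norm $0$.

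Given a $\Delta$-block tree $T$, I would build the weakly null tree $S$ by recursion on the nodes of $T$. At a node $\sigma=(x_1,\ldots,x_n)\in T$, the $T$-successors $(y_i)$ admit, by reflexivity of $E$, a weakly convergent subsequence $y_{i_k}$ with weak limit $y^*\in E$ of norm at most $\delta_{n+1}$. The normalisations $\tilde y_k=(y_{i_k}-y^*)/\norm{y_{i_k}-y^*}$ form a weakly null sequence in $S_E$ lying within $c\delta_{n+1}$ of $(y_{i_k})$, and these I would take as the $S$-successors at $\sigma$. The inclusion $[S]\subseteq[T]_{\Delta c}$ is immediate from the construction, while the opposite inclusion $[T]\subseteq[S]_{\Delta c}$ follows provided the subsequence choices at different nodes are made in a compatible way, so that every branch of $T$ is matched by some branch of $S$ within $c\delta_n$ at level $n$.

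The principal obstacle is precisely this coherent coordination of node-by-node choices across the whole tree: at each node a subsequence must be extracted to gain weak convergence, and naively these thinnings can destroy branches of the original tree. Resolving it requires a diagonal or Ramsey-type argument --- either a Bessaga--Pelczynski-style selection run along each individual branch to produce a matching branch of $S$, or a global infinite-dimensional Ramsey argument in the spirit of those invoked in Theorem~\ref{strategy for I} and in \cite{IAG, odell}. This global coordination is the technical heart of the construction and is the step where I would expect to spend most of the effort.
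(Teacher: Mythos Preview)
The paper does not give its own proof of this proposition: it is stated as a black box, introduced with ``We recall also a statement from \cite{IAG} that sums up some of the elements of the construction of Odell and Schlumprecht from \cite{odell}''. So there is no in-paper argument to compare your proposal against; the authors explicitly import the result from \cite{IAG,odell} and use it as an ingredient in Lemma~\ref{basic}.

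That said, your outline is pointed in the right direction and identifies the genuine difficulty honestly. The embedding of $E$ into a reflexive $F$ with a (necessarily shrinking) F.D.D.\ is exactly the starting point in \cite{odell}, and your local step --- at a node of $T$ pass to a weakly convergent subsequence of the successors, subtract the small weak limit (which lies in $E$ by reflexivity and has norm at most $\delta_{n+1}$), and renormalise --- is correct and yields a weakly null sequence in $S_E$ within $2\delta_{n+1}$ of the retained successors. The real issue, as you say, is that thinning the successor sets at each node discards branches of $T$, so the inclusion $[T]\subseteq[S]_{\Delta c}$ does not follow from the node-by-node construction alone. Your proposal does not resolve this; you gesture at ``a diagonal or Ramsey-type argument'' but do not supply one, and a naive diagonalisation will not recover the lost branches, since distinct successors at a node of a $\Delta$-block tree need not be close to one another. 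In the cited sources the two inclusions are obtained by a more careful construction tied to the specific Johnson--Zippin embedding (and, in \cite{IAG}, to the game-theoretic/Ramsey machinery behind Theorem~\ref{strategy for I}) rather than by an uncoordinated level-by-level selection; that is the step you would need to fill in, and it is not a routine bookkeeping matter.
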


We can now assemble the above results into the following general lemma.

\begin{lemme}\label{basic}
Suppose $E$ is a separable reflexive Banach space and $\B\subseteq S_E^\infty$ is a coanalytic set, invariant under equivalence, such that any weakly null tree on $S_E$ has a branch in $\B$. 
Then there are $\Gamma\searrow0$, $M\nearrow \infty$ and  a reflexive space  $F\supseteq E$ with an F.D.D. $(F_i)_{i=1}^\infty$ such that  any element of ${\rm bb}_{E,\Gamma, M}(F_i)$ is a  basic sequence all of whose normalised block sequences belong to  $\B$.
\end{lemme}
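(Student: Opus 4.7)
The plan is to chain the four preceding tools in the order Proposition~\ref{wbjohnson}, Lemma~\ref{interior}, Theorem~\ref{strategy for I}, Lemma~\ref{all blocks}. The weakly null tree hypothesis will be used to verify condition (2) of Theorem~\ref{strategy for I}, and Lemma~\ref{all blocks} will then upgrade the output from a condition on $(x_i)\in{\rm bb}_{E,\Delta_2,M}(F_i)$ to a condition on all normalised block sequences of such $(x_i)$.

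First, apply Proposition~\ref{wbjohnson} to $E$ to fix once and for all a reflexive $F\supseteq E$, an F.D.D. $(F_i)_{i=1}^\infty$ of $F$, and a constant $c>1$ which will let us convert $\Delta$-block trees in $S_E$ into weakly null trees in $S_E$ to within tolerance $c\Delta$. Then, relative to this F.D.D., apply Lemma~\ref{interior} to the equivalence-invariant set $\B$ to obtain $\Delta_0\searrow 0$ with $\B_{\Delta_0}\cap {\rm bb}_{E,\Delta_0}(F_i)\subseteq{\rm Int}_{\Delta_0}(\B)$.

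The central step is to verify hypothesis (2) of Theorem~\ref{strategy for I}. Choose $\Delta_1\searrow 0$ satisfying $c\delta_{1,i}\leqslant\delta_{0,i}$ for every $i$ (so in particular $\Delta_1\leqslant\Delta_0$). Given any $\Delta_1$-block tree $T$, which by the remark after the definition we may assume also satisfies $[T]\subseteq{\rm bb}_{E,\Delta_1}(F_i)$, Proposition~\ref{wbjohnson} yields a weakly null tree $S$ in $S_E$ with $[S]\subseteq[T]_{c\Delta_1}$. By the standing hypothesis, $S$ has a branch $(y_i)_{i=1}^\infty\in\B$, and the inclusion produces a branch $(x_i)_{i=1}^\infty\in[T]$ with $\norm{x_i-y_i}<c\delta_{1,i}\leqslant\delta_{0,i}$ for all $i$. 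Since $(x_i)\in[T]\subseteq{\rm bb}_{E,\Delta_1}(F_i)\subseteq{\rm bb}_{E,\Delta_0}(F_i)$, we have $(x_i)\in\B_{\Delta_0}\cap{\rm bb}_{E,\Delta_0}(F_i)$, and Lemma~\ref{interior} places $(x_i)$ in ${\rm Int}_{\Delta_0}(\B)\subseteq{\rm Int}_{\Delta_1}(\B)$, the latter inclusion by monotonicity of the interior operator in $\Delta$. Thus condition (2) of Theorem~\ref{strategy for I} holds with $\Delta=\Delta_1$.

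Theorem~\ref{strategy for I} then supplies $\Delta_2\searrow 0$ and $M\nearrow\infty$ with ${\rm bb}_{E,\Delta_2,M}(F_i)\subseteq{\rm Int}_{\Delta_2}(\B)\subseteq\B$. Finally, apply Lemma~\ref{all blocks} with $\Theta=\Delta_2$ to produce $\Gamma\searrow 0$; shrinking $\Gamma$ further if necessary, assume also $\Gamma\leqslant\Delta_2$ componentwise. For any $(x_i)_{i=1}^\infty\in{\rm bb}_{E,\Gamma,M}(F_i)$, Lemma~\ref{all blocks}(1) says $(x_i)$ is a basic sequence, and Lemma~\ref{all blocks}(2) says every normalised block sequence of $(x_i)$ lies in ${\rm bb}_{E,\Delta_2,M}(F_i)\subseteq\B$, completing the proof. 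The main obstacle I anticipate is the bookkeeping in the middle step, where one must choose $\Delta_1$ small enough both to absorb the $c$-blowup from Proposition~\ref{wbjohnson} inside the $\Delta_0$-tolerance of Lemma~\ref{interior}, and to keep $\Delta_1\leqslant\Delta_0$ so that $\Delta_1$-block branches remain $\Delta_0$-block sequences; once these tolerances are aligned, the rest is a cascade of black-box invocations.
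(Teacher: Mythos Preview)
Your argument is correct and follows essentially the same route as the paper's own proof: Proposition~\ref{wbjohnson} for the ambient F.D.D., Lemma~\ref{interior} to absorb the $c$-blowup, a verification of condition~(2) of Theorem~\ref{strategy for I} via the weakly-null-tree hypothesis, and then Lemma~\ref{all blocks} to pass to block sequences. The only cosmetic difference is that the paper applies Lemma~\ref{interior} directly at tolerance $\Delta c$ (obtaining $\B_{\Delta c}\cap{\rm bb}_{E,\Delta c}(F_i)\subseteq{\rm Int}_{\Delta c}(\B)$) rather than first fixing $\Delta_0$ and then shrinking to $\Delta_1\leqslant\Delta_0/c$, and your final ``shrink $\Gamma$ further'' step is unnecessary since $(x_i)$ is a normalised block sequence of itself.
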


\begin{proof}
Pick first, by Proposition \ref{wbjohnson}, a space $F$
containing $E$ with a shrinking F.D.D. $(F_i)_{i=1}^\infty$ and a constant $c>1$ such that,
for any $\Delta\searrow0$ and $\Delta$-block tree $T$ in $E$, 
there is a weakly null tree $S$ in $E$ with
\begin{equation}\label{close}
[S]\subseteq [T]_{\Delta c}\quad\&\quad[T]\subseteq [S]_{\Delta c}.
\end{equation}
Choose also, by Lemma \ref{interior}, some  $\Delta\searrow0$ such that
$$
\B_{\Delta c}\cap {\rm bb}_{E,\Delta c}(F_i)\subseteq {\rm Int}_{\Delta c}(\B).
$$

We claim that any $\Delta$-block tree has a branch in ${\rm Int}_\Delta(\B)$. To see this, suppose
$T$ is a $\Delta$-block tree and assume without loss of generality that  $[T]\subseteq {\rm bb}_{E,\Delta}(F_i)\subseteq {\rm bb}_{E,\Delta c}(F_i)$.
Pick also a  weakly null tree $S$ satisfying (\ref{close}). Then, as $[S]\cap \B\neq \tom$, also 
$$
\tom\neq [T]\cap \B_{\Delta c}\subseteq[T]\cap  {\rm bb}_{E,\Delta c}(F_i)\cap \B_{\Delta c}\subseteq [T]\cap {\rm Int}_{\Delta c}(\B)\subseteq [T]\cap {\rm Int}_{\Delta}(\B),
$$
showing that $T$ has a branch in ${\rm Int}_\Delta(\B)$.

Applying Theorem \ref{strategy for I}, we find some $\Theta\searrow0$ and $M\nearrow\infty$ such that ${\rm bb}_{E,\Theta,M}(F_i)\subseteq   {\rm Int}_\Theta(\B)\subseteq \B$ and, applying Lemma \ref{all blocks}, the statement follows.
\end{proof}


\section{Killing the overlap}

The next proposition is Corollary 4.4 in \cite{odell}, except that condition (5) is not listed in the statement of the corollary. However, it can easily be gotten from the proof, provided that one chooses, in the notation of the paper, $\eps_i<\delta_i$.

\begin{prop}\label{overlap}
Suppose $F$ is a reflexive space with an F.D.D. $(H_i)_{i=1}^\infty$, $E\subseteq F$ is a subspace and $\Sigma\searrow0$. Then there are integers $0=a_0<a_1<\ldots$ such that for all $x\in S_E$ there are a sequence $(x_i)_{i=1}^\infty$ in $E$, a subset
$D\subseteq \N$ and numbers $a_{i-1}<b_i\leqslant a_i$, $b_0=0$, satisfying the following five conditions.
\begin{enumerate}
  \item $x=\sum_{i=1}^\infty x_i$,
  \item $\a i\notin D\; \norm{x_i}<\sigma_i$,
  \item $\a i\in D\;\Norm{[H_{b_{i-1}+1}\oplus\ldots\oplus H_{b_i-1}]x_i-x_i}<\sigma_i\|x_i\|$,
  \item $\a i\;\Norm{[H_{b_{i-1}+1}\oplus\ldots\oplus H_{b_i-1}]x-x_i}<\sigma_i$,
  \item $\a i\; \norm{H_{b_i}x}<\sigma_i$.
\end{enumerate}
\end{prop}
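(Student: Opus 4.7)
The plan has two distinct phases: constructing the grid $(a_i)_{i=0}^\infty$ once and for all using reflexivity, and then, for each $x \in S_E$ separately, carving out the data $(x_i)$, $D$, $(b_i)$ by a greedy recursion. The first phase is a clean compactness argument; the second is bookkeeping in which the tolerances must be chosen sufficiently slack.

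For the first phase, I would argue inductively. Having chosen $0 = a_0 < \ldots < a_{k-1}$, I claim that for any prescribed $\epsilon_k > 0$ there is $a_k > a_{k-1}$ such that every $x \in S_E$ admits some $b \in (a_{k-1}, a_k]$ with $\norm{H_b x} < \epsilon_k$. Indeed, failure would yield a sequence $(z_N)_{N > a_{k-1}}$ in $S_E$ with $\norm{H_b z_N} \geqslant \epsilon_k$ for all $a_{k-1} < b \leqslant N$. By reflexivity of $E$, pass to a subsequence $z_{N_j}$ converging weakly to some $x \in B_E$; since each $H_b$ is a bounded operator into a finite-dimensional space, it is weak-to-norm continuous on bounded sets, so $\norm{H_b x} \geqslant \epsilon_k$ for every $b > a_{k-1}$, contradicting $\sum_b H_b x = x$ in $F$. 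Apply this with $\epsilon_k$ chosen much smaller than $\sigma_k$ (the slack being controlled by the F.D.D.\ constant $K$ and by what the recursion below requires) to obtain $a_k$. This choice is precisely the reason the hint in the excerpt is to take $\eps_i < \delta_i$.

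For the second phase, fix $x \in S_E$ and, in each interval $(a_{i-1}, a_i]$, select $b_i$ with $\norm{H_{b_i} x} < \epsilon_i$, which takes care of condition~(5). With $P_i := [H_{b_{i-1}+1} \oplus \ldots \oplus H_{b_i-1}]$ and $Q_i := H_{b_i}$, one has the $F$-valued decomposition $x = \sum_i P_i x + \sum_i Q_i x$ with the second sum arbitrarily small. The naive assignment $x_i := P_i x + Q_i x$ trivially delivers (1)--(4) but lies in $F$, not in $E$. To land inside $E$ I would recurse: set $r_0 := x \in E$; at stage $i$, if $\norm{P_i r_{i-1}}$ exceeds a threshold comparable to $\sigma_i$, put $i \in D$ and take $x_i \in E$ which is $\sigma_i$-close to $P_i x$ and which is concentrated in the $i$-th gap block up to $\sigma_i \norm{x_i}$; otherwise set $x_i$ so that $\norm{x_i} < \sigma_i$. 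In both cases set $r_i := r_{i-1} - x_i \in E$, and observe that condition (1) will follow as long as $r_i \to 0$.

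The main obstacle is the existence, at each stage, of an $x_i \in E$ simultaneously satisfying (3) or (2) and (4), since an element of $F$ supported in a gap block need not be close to $E$ in any useful sense. The resolution relies on the fact that the residual $r_{i-1} \in E$ agrees, up to the controllably small $H_{b_{i-1}} x$ and $Q_{i-1} x$ from earlier steps, with the $F$-vector $[b_{i-1}, \infty](x) - \sum_{j \leqslant i-1}(x_j - P_j x)$; applying $P_i$ to $r_{i-1}$ then produces a vector close in $F$ to $P_i x$ whose concentration error is bounded entirely by the two ``boundary'' terms $Q_{i-1} x$ and $Q_i x$, both smaller than $\epsilon_i \ll \sigma_i$. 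Taking $x_i$ to be this projection rounded back into $E$ (using that $r_{i-1}$ itself already lies in $E$) gives the decomposition; this propagation of error estimates across stages, rather than the compactness argument, is the real work of the proof.
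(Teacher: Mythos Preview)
The paper does not prove this proposition at all: it is quoted as Corollary~4.4 of Odell--Schlumprecht \cite{odell}, with only the remark that condition~(5) drops out of their proof once one takes $\eps_i<\delta_i$ in their notation. So there is no in-paper argument to compare against; I assess your sketch on its own terms.

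Your first phase is correct and is indeed one ingredient of the Odell--Schlumprecht argument, but by itself it only secures condition~(5). The gap is in the second phase, exactly at the step you flag as ``the main obstacle'' and then wave through. You propose to take $x_i$ as ``$P_i r_{i-1}$ rounded back into $E$'', relying on $r_{i-1}\in E$; but the interval projection $P_i$ does not map $E$ into $E$, and nothing in your construction of the $(a_i)$ guarantees that $P_i r_{i-1}$ (or $P_i x$) lies anywhere near $E$. There is no rounding mechanism available, and the approximation $r_{i-1}\approx [b_{i-1},\infty)(x)$ you invoke, while correct, only relates two vectors of $F$; it gives no $E$-element close to the block piece.

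In the Odell--Schlumprecht proof the compactness step does more work: at each stage $a_k$ is chosen so that every $x\in S_E$ admits a $b\in(a_{k-1},a_k]$ for which not only is $\norm{H_b x}$ small but the initial segment $[H_1\oplus\ldots\oplus H_{b-1}]x$ lies within $\eps_k$ of some element of $E$. (This again uses reflexivity and weak sequential compactness, now applied to a putative sequence of $x$'s for which no such approximation exists.) The $x_i$ are then taken as successive differences of these $E$-approximants to the partial sums of $x$, and conditions (1)--(4) follow by telescoping. Your architecture is right, but the decisive second use of reflexivity---the one that forces the decomposition to land in $E$---is missing; contrary to your closing sentence, it is this strengthened compactness argument, not the downstream error propagation, that carries the proof.
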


Combining Lemma \ref{basic} and Proposition \ref{overlap}, we are now in a position to prove our main result, Theorem \ref{ros}.

\begin{thm}
Suppose that $E$ is a separable reflexive Banach space such that any weakly
null tree in $E$ has a branch $(x_i)_{i=1}^\infty$ satisfying $(x_{2i-1})_{i=1}^\infty\sim(x_{2i})_{i=1}^\infty$. Then $E$
embeds into a reflexive space with an unconditional Schauder basis having the shift property.
\end{thm}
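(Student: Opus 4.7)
The plan is to apply Lemma~\ref{basic} to the set $\B\subseteq S_E^\infty$ of normalised sequences $(x_i)_{i=1}^\infty$ in $E$ for which $(x_{2i-1})_{i=1}^\infty\sim(x_{2i})_{i=1}^\infty$, then use Proposition~\ref{overlap} to coarsen the resulting F.D.D., and finally verify condition~(5) of Theorem~\ref{equivalences} for the assembled target space, which will deliver both unconditionality and the shift property at once. The set $\B$ is Borel and manifestly invariant under equivalence, and by hypothesis every weakly null tree in $E$ has a branch in $\B$, so Lemma~\ref{basic} yields a reflexive $F\supseteq E$ with an F.D.D.~$(F_i)$, together with $\Gamma\searrow 0$ and $M\nearrow\infty$, such that every $M$-separated $\Gamma$-block sequence in $E$ is basic and every one of its normalised block subsequences lies in $\B$, with uniform constants (by the standard piecing argument used in the proof of Theorem~\ref{equivalences}). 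Applying Proposition~\ref{overlap} with $\Sigma\searrow 0$ chosen rapidly enough relative to $\Gamma$, $M$, and the basis constant of $(F_i)$ produces integers $0=a_0<a_1<\ldots$; coarsen the F.D.D.~to $H_j:=F_{a_{j-1}+1}\oplus\ldots\oplus F_{a_j}$. The overlap-killing decomposition then writes any $x\in S_E$ as $x=\sum_j\tilde x_j$ with $\tilde x_j\in H_j$ up to controlled perturbation, and the normalised non-negligible $\tilde x_j$'s form an $M$-separated $\Gamma$-block sequence in~$E$.

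Define the target space $Z$ as the completion of $\bigoplus_j H_j$ under the $1$-unconditional F.D.D.~norm
$$
\|(y_j)\|_Z=\sup_{\epsilon_j\in\{-1,+1\}}\Norm{\sum_j\epsilon_j y_j}_F,
$$
so that $(H_j)$ is a $1$-unconditional F.D.D.~of $Z$, and obtain a Schauder basis of $Z$ by picking a basis inside each $H_j$. Embed $E\hookrightarrow Z$ by $x\mapsto(\tilde x_j)$. The lower estimate $\|x\|_F\leqslant\|(\tilde x_j)\|_Z$ is free; the upper estimate reduces to showing that $(\tilde x_j)$ is uniformly unconditional in $F$, which follows once condition~(5) of Theorem~\ref{equivalences} is established for $(\tilde x_j)$, via the implication ``(5) implies unconditionality'' indicated in the remark immediately following Theorem~\ref{equivalences}.

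To verify condition~(5) for the F.D.D.~$(H_j)$ of $Z$ directly, take normalised block sequences $(u_k), (v_k)$ of $(H_j)$ in $Z$ with strictly interleaving supports and form the concatenated block sequence $w_{2k-1}:=u_k$, $w_{2k}:=v_k$. By the definition of $\|\cdot\|_Z$ together with the overlap-killing decomposition, this block sequence corresponds, up to a controlled perturbation, to an $M$-separated $\Gamma$-block sequence in $E$; the conclusion of the first paragraph then gives $(w_{2k-1})\sim(w_{2k})$, i.e.\ $(u_k)\sim(v_k)$. Thus~(5) holds uniformly for the basis of $Z$, which (by the concluding remark of Section~1) forces unconditionality and hence, via the equivalences in Theorem~\ref{equivalences}, the shift property.

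The main obstacle is the approximation step in the verification of condition~(5): block vectors of $(H_j)$ in $Z$ are not themselves elements of $E$, so one must approximate them by overlap-killing decompositions of genuine $E$-elements and carefully carry the uniform constants through this approximation. A secondary subtlety is the refinement of the F.D.D.~$(H_j)$ to a Schauder basis of $Z$ with uniformly bounded unconditionality constants; this is handled by choosing compatible bases (e.g.~Auerbach bases) inside each $H_j$ and, if necessary, a further coarsening so that intra-block dimensions and constants remain controlled.
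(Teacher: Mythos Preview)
Your approach has a genuine gap at exactly the point you flag as ``the main obstacle'', and it is fatal rather than merely technical. The space $Z$ you build is essentially $F$ with its F.D.D.\ unconditionalised; but $F$ is a superspace of $E$, and an arbitrary normalised block vector of $(H_j)$ in $Z$ is just some element of $F$ supported on a run of blocks. There is no mechanism that brings it close to $E$: Proposition~\ref{overlap} decomposes a given $x\in S_E$ into pieces that are almost block-supported, not the other way around. Hence your claim that a block sequence $(w_k)$ of $(H_j)$ ``corresponds, up to a controlled perturbation, to an $M$-separated $\Gamma$-block sequence in $E$'' is unsupported, and without it you cannot invoke the conclusion of Lemma~\ref{basic}, so condition~(5) does not follow. (There is also a secondary issue: if $\tilde x_j$ denotes the $E$-valued pieces coming from Proposition~\ref{overlap}, the map $x\mapsto(\tilde x_j)$ is not linear; if instead $\tilde x_j=H_jx$, those pieces are not in $E$ and the sentence ``the normalised non-negligible $\tilde x_j$'s form an $M$-separated $\Gamma$-block sequence in $E$'' is false.)

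The paper avoids this obstruction by never asking arbitrary block vectors of the target space to lie near $E$. Instead it first fixes a single sequence $(u_i)\in{\rm bb}_{E,\Gamma,M}(F_i)$, shows (using sign changes) that $(u_{2i-1})$ is unconditional with the shift property, sets $v_i=u_{2i+1}$, and then defines the target norm by $\triple{y}=\big\|\sum_i\|A_iy\|\,v_{a_i}\big\|$. The shift property of the target is thus \emph{inherited from $(v_i)$}, and the overlap-killing decomposition is used only to prove that the (linear) map $x\mapsto\sum_iA_ix$ is an isomorphic embedding, by comparing $\|x\|$ with $\big\|\sum_i\|x_i\|\,v_{a_i}\big\|$ via the equivalence $(x_i/\|x_i\|)_{i\in D}\sim_{C^2}(v_{a_i})_{i\in D}$. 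Finally the F.D.D.\ is refined to a basis by embedding each $A_i$ into $\ell_\infty^{k_i}$, and the shift property of the resulting basis is checked directly (condition~(6) of Theorem~\ref{equivalences}) using only the shift property of $(v_{a_i})$, never needing block vectors of the target to be near $E$.
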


\begin{proof}Applying Lemma \ref{basic} to the set 
$$
\B=\big\{(x_i)_{i=1}^\infty\in S_E^\infty\del (x_{2i-1})_{i=1}^\infty\sim(x_{2i})_{i=1}^\infty\big\},
$$
we find $\Gamma\searrow0$, $M\nearrow \infty$ and a reflexive space  $F\supseteq E$ with an F.D.D. $(F_i)_{i=1}^\infty$ such that  any element of ${\rm bb}_{E,\Gamma, M}(F_i)$ is a  basic sequence all of whose normalised block sequences $(y_i)_{i=1}^\infty$ satisfy $(y_{2i-1})_{i=1}^\infty\sim(y_{2i})_{i=1}^\infty$.

We claim that there is a constant $C\geqslant 1$ such that $(y_{2i-1})_{i=1}^\infty\sim_C(y_{2i})_{i=1}^\infty$ for any such normalised block basis $(y_i)_{i=1}^\infty$. For if not, then,  by concatenating finite bits of sequences, we would be able to produce  some $(u_i)_{i=1}^\infty \in {\rm bb}_{E,\Gamma, M}(F_i)$ and a normalised block sequence $(y_i)_{i=1}^\infty$  of $(u_i)_{i=1}^\infty$ failing 
$(y_{2i-1})_{i=1}^\infty\sim(y_{2i})_{i=1}^\infty$, which is impossible.

Since it suffices to prove the conclusion of the theorem for a cofinite-dimensional subspace of $E$, by considering the cofinite-dimensional subspaces $F_{m_1+1}\oplus F_{m_1+2}\oplus F_{m_1+3}\oplus\ldots $ and $E\cap \big(F_{m_1+1}\oplus F_{m_1+2}\oplus F_{m_1+3}\oplus\ldots\big)$ of respectively $F$ and $E$, we can, without loss of generality, assume that $m_1=0$ and thus not worry about the initial offset by $m_1$ in the definition of $M$-separation (cf. Definition \ref{delta-block}).

Pick $(u_i)_{i=1}^\infty\in {\rm bb}_{E,\Gamma,M}(F_i)$. Then, for any choice of signs $\eps_i\in\{-1,1\}$, also $(\eps_iu_i)_{i=1}^\infty\in {\rm bb}_{E,\Gamma,M}(F_i)$ and hence $(\eps_{2i-1}u_{2i-1})_{i=1}^\infty\sim(\eps_{2i}u_{2i})_{i=1}^\infty$. It follows that $(u_{2i-1})_{i=1}^\infty$ is a basic sequence, equivalent to $(\eps_{2i}u_{2i})_{i=1}^\infty$ for any choice of signs $\eps_i\in \{-1,1\}$, and thus must be unconditional. 

Now $[u_{2i-1}]_{i=1}^\infty$ can be equivalently renormed so that $(u_{2i-1})_{i=1}^\infty$ is $1$-unconditional and, by a result of A. Pe\l czy\'nski \cite{pelczynski}, this renorming extends to an equivalent renorming of $F$. So, without loss of generality, we shall assume that $(u_{2i-1})_{i=1}^\infty$ is $1$-uncondi\-tio\-nal and has the shift property with some constant $C$.
Moreover, as $E$ is reflexive, it follows by a theorem of R. C. James (Theorem 3.2.13 in \cite{albiac}) that $(u_{2i-1})_{i=1}^\infty$ is both shrinking and boundedly complete.

We let $v_i=u_{2i+1}$, whence $(v_i)_{i=1}^\infty$ is the subsequence of $(u_{2i-1})_{i=1}^\infty$ omitting the first term.
Choose also $\sigma_i<\gamma_{2i-1}$ such that $\sum_{i=1}^\infty\sigma_i<\frac 1{24KC^2}$, , where $K$ denotes the constant of the decomposition $(F_i)_{i=1}^\infty$.

Since $(u_i)_{i=1}^\infty$ is an $M$-separated $\Gamma$-block sequence,  $\N$ can be partitioned into successive finite intervals
$$
L_1<I_1<R_1\;\;\;<\;\;\;L_2<I_2<R_2\;\;\;<\;\;\;L_3<I_3<R_3\;\;\;<\ldots
$$
such that
\begin{itemize}
\item[(a)] $\norm {I_i(v_i)-v_i}<\gamma_{2i+1}$,
\item[(b)] for every $i>1$ there is a $j$ such that $[m_j,m_{j+1}]\subseteq L_i$
\item[(c)] and for every $i$ there is a $j$ such that $[m_j,m_{j+1}]\subseteq R_i$.
\end{itemize}
Moreover, for 
$$
H_i=\sum_{j\in L_i\cup I_i\cup R_i}F_j,
$$
let $(a_i)_{i=0}^\infty$ be given as in Proposition \ref{overlap} and set
$$
A_i=H_{a_{i-1}+1}\oplus\ldots\oplus H_{a_i}.
$$
We define a new norm $\triple\cdot$ on ${\rm span}(\bigcup_{i=1}^\infty A_i)$ by setting
$$
\triple y=\Big\|\sum_{i=1}^\infty \|A_iy\|v_{a_i}\Big\|.
$$
Since $(v_i)_{i=1}^\infty$ is  $1$-unconditional,  $\triple\cdot$ is
indeed a norm and we can therefore consider the completion $V=\overline{\rm span}^{\triple\cdot}\big(\bigcup_{i=1}^\infty A_i\big)$.
Moreover, we claim that the mapping 
$$
T\colon x\in E\mapsto \sum_{i=1}^\infty A_ix\in V
$$
is a well-defined isomorphic embedding of $E$ into $V$.

To see this, suppose $x\in S_E$ is fixed and let $(x_i)_{i=1}^\infty$, $(b_i)_{i=0}^\infty$ and $D\subseteq \N$
be given as in Proposition \ref{overlap}. Let also
$$
B_i=H_{b_{i-1}+1}\oplus\ldots\oplus H_{b_i-1}.
$$
Then the decomposition $F=F_1\oplus F_2\oplus F_3\oplus\ldots$ blocks as
\[\begin{split}
F=&A_1\oplus A_2\oplus A_3\oplus\ldots\\
=&B_1\oplus H_{b_1}\oplus B_2\oplus H_{b_2}\oplus B_3\oplus H_{b_3}\oplus\ldots,
\end{split}\]
where, moreover, 
$$
A_i\;\subseteq\; B_i\oplus H_{b_i}\oplus B_{i+1}
$$
and, letting $A_0$ be the trivial space $\{0\}$,
$$
B_{i}\;\subseteq\; A_{i-1}\oplus A_{i}.
$$
It follows that with respect to the ordering of the original decomposition $(F_i)_{i=1}^\infty$, we have
\begin{equation}
B_1<L_{b_1}<I_{b_1}<R_{b_1}<B_2<L_{b_2}<I_{b_2}<R_{b_2}<B_3<\ldots.
\end{equation}
Now, by condition (4) of Proposition \ref{overlap},
$$
\big | \|B_ix\|-\|x_i\|\big|\leqslant \|B_ix-x_i\|<\sigma_i,
$$
and so, using condition (5) of Proposition \ref{overlap}, we have
\begin{displaymath}\begin{split}
\|A_ix\|
\leqslant&2K\Norm{[B_i\oplus H_{b_i}\oplus B_{i+1}]x}\\
\leqslant& 2K \big(\|B_ix\|+\norm{H_{b_i}x}+\|B_{i+1}x\|\big)\\
<& 2K \big(\|x_i\|+\|x_{i+1}\|+3\sigma_i\big).
\end{split}\end{displaymath}
Note also that
$$
\|x_i\|\leqslant \|B_ix\|+\sigma_i\leqslant 2K\|A_{i-1}x\|+2K\|A_ix\|+\sigma_i,
$$
and, by condition (3) of Proposition \ref{overlap}, for any $i\in D$, we have
$$
\Norm{B_ix_i-x_i}<\sigma_i\norm {x_i}<\gamma_{2i-1}\norm {x_i}.
$$
List now $D$ increasingly as $D=\{d_1,d_2,d_3,\ldots\}$ and note that, as $2i< 2b_{d_i}+1$,
$$
\norm{I_{b_{d_i}}(v_{b_{d_i}})-v_{b_{d_i}}}<\gamma_{2b_{d_i}+1}\leqslant  \gamma_{2i}.
$$
Therefore, by the ordering (2) above, we see that
$$
\Big(\frac{x_{d_1}}{\norm{x_{d_1}}},v_{b_{d_1}},\frac{x_{d_2}}{\norm{x_{d_2}}},v_{b_{d_2}},\frac{x_{d_3}}{\norm{x_{d_3}}},v_{b_{d_3}},\frac{x_{d_4}}{\norm{x_{d_4}}},\ldots\Big)
$$
is an $M$-separated $\Gamma$-block sequence, as witnessed by the sequence of interval projections
$$
B_{d_1},I_{b_{d_1}},B_{d_2},I_{b_{d_2}},B_3,I_{b_{d_3}},B_4,\ldots,
$$
and hence $\big(\frac{x_{i}}{\norm{x_{i}}}\big)_{i\in D}\sim_C\big(v_{b_i}\big)_{i\in D}$.
Furthermore, as $(v_i)_{i=1}^\infty$ has the shift property with constant $C$ and $b_1\leqslant a_1<b_2\leqslant a_2<\ldots$, we have 
\begin{equation}\label{equiv}
(v_{b_{i+1}})_{i=1}^\infty\sim_C(v_{b_i})_{i=1}^\infty\sim_C(v_{a_i})_{i=1}^\infty\sim_C(v_{a_{i+1}})_{i=1}^\infty
\end{equation}
and therefore $\big(\frac{x_{i}}{\norm{x_{i}}}\big)_{i\in D}\sim_{C^2}\big(v_{a_i}\big)_{i\in D}$. Since now $\sum_{i=1}^\infty x_i$ converges and $\sum_{i\notin D}\norm{x_i}<\sum_{i\notin D}\sigma_i<\infty$, it follows that also $\sum_{i=1}^\infty \norm{x_i}v_{a_i}$ and  $\sum_{i=1}^\infty \norm{x_{i+1}}v_{a_i}$ converge. Since $\norm{A_ix}\leqslant2K \big(\|x_i\|+\|x_{i+1}\|+3\sigma_i\big)$ and $(v_i)_{i=1}^\infty$ is unconditional,  we finally see that the sum $\sum_{i=1}^\infty \norm{A_ix}v_{a_i}$ converges and hence that $Tx=\sum_{i=1}^\infty A_ix\in V$ is well-defined.

By the same mode of reasoning, one verifies the following sequence of inequalities.
\begin{displaymath}\begin{split}
\|x\|
=&\Norm{\sum_{i=1}^\infty x_i}\\
\leqslant&\Big\|\sum_{i\in D}\|x_i\|\frac{x_i}{\|x_i\|}\Big\|+\Norm{\sum_{i\notin D} x_i}\\
\leqslant&C\Big\|\sum_{i\in D}\|x_i\|v_{b_i}\Big\|+\sum_{i\notin D}\|x_i\|\\
\leqslant&C^2\Big\|\sum_{i=1}^\infty \|x_i\|v_{a_i}\Big\|+\sum_{i\notin D}\sigma_i\\
\leqslant&C^2\Big\|\sum_{i=1}^\infty \big(2K\|A_{i-1}x\|+2K\|A_ix\|+\sigma_i\big)v_{a_i}\Big\|+\frac14\\
\leqslant&2KC^2\big\|\sum_{i=1}^\infty \|A_{i-1}x\|v_{a_{i}}\big\|+2KC^2\big\|\sum_{i=1}^\infty \|A_ix\|v_{a_i}\big\|+C^2\sum_{i=1}^\infty \sigma_i+\frac14\\
\leqslant&2KC^2(C+1)\big\|\sum_{i=1}^\infty \|A_ix\|v_{a_i}\big\|+\frac12\\
\leqslant&4KC^3\triple{Tx}+\frac12.
\end{split}\end{displaymath}
Thus, as $\|x\|-\frac 12=\frac12\|x\|$, we have $\|x\|\leqslant 8KC^3\triple{Tx}$. 
A similar argument shows that $\triple{Tx}\leqslant 5KC^3\norm{x}$, whereby $T$ is an isomorphic embedding of $E$ into $V$.

We shall now show how to embed $V$ into a space with a basis having the block property, which will finish the proof of the theorem.
First, to simplify notation, we let $w_i=v_{a_i}$. Fix also  $k_i\geqslant 1$ such that $A_i$ embeds
with constant $2$ into $Z_i=\ell_\infty^{k_i}$. Then $V$ clearly embeds with
constant $2$ into $Z=\sum_{i=1}^\infty Z_i$ equipped with the norm
$$
\triple{y}'=\Big\|\sum_{i=1}^\infty\|Z_iy\|w_i\Big\|.
$$
Moreover, since $(w_i)_{i=1}^\infty=(v_{a_i})_{i=1}^\infty$ is both shrinking and boundedly complete, $Z$ is
reflexive.

For each $i$, we let $(e_1^i,e_2^i,\ldots,e_{k_i}^i)$ be the standard unit
vector basis for $\ell_\infty^{k_i}$. Then
$$
(f_i)_{i=1}^\infty=(e_1^1,e_2^1,\ldots,e_{k_1}^1,e_1^2,e_2^2,\ldots,e_{k_2}^2,\ldots)
$$
is a $1$-unconditional basis for $Z$, which we claim has the block property.
To see this, suppose $(y_i)_{i=1}^\infty$ is a normalised
block sequence of $(f_i)_{i=1}^\infty$ and set $r_i=\min {\rm supp}\; y_i$ and
$$
i\in A\equi \e j\;\; y_i\in Z_j.
$$
Notice that for all $j$ there are at most two distinct $i\notin A$ such that
$Z_jy_i\neq 0$. We can therefore split $\sim A$ into two sets $B$ and $D$ such that
for all $j$ there is at most one $i$ from each of $B$ and $D$ such that
$Z_jy_i\neq 0$. By unconditionality, it is enough to show that $(y_i)_{i\in
A}\sim(f_{r_i})_{i\in A}$, $(y_i)_{i\in B}\sim(f_{r_i})_{i\in B}$ and
$(y_i)_{i\in D}\sim(f_{r_i})_{i\in D}$. Since the cases $B$ and $D$ are
similar, let us just do $A$ and $B$.

For each $i\in B$, let $n_i$ and $m_i$ be respectively the minimal and maximal $j$ such that $Z_jy_i\neq0$, whence
$y_i=Z_{n_i}y_i+\ldots+Z_{m_i}y_i$ and $n_i<m_i<n_j<m_j$ for $i<j$ in $B$. In particular, this means that if
$$
z_i=\sum_{j=n_i}^{m_i}\|Z_jy_i\|w_{j},
$$
then $(z_i)_{i\in B}$ is a block sequence of $(w_i)_{i=1}^\infty$ and
$$
\norm{z_i}=\Norm{\sum_{j=n_i}^{m_i}\|Z_jy_i\|w_{j}}=\triple{y_i}'=1.
$$ 
As $(w_i)_{i=1}^\infty$ has the shift property, this means that $(z_i)_{i\in B}\sim(w_{n_i})_{i\in B}\
\sim(f_{r_i})_{i\in B}$.
On the other hand, if $(\lambda_i)_{i=1}^\infty\in c_{00}$, then
\begin{displaymath}\begin{split}
\Triple{\sum_{i\in B}\lambda_iy_i}' 
=& \Big\|\sum_{i\in B}\sum_{j=n_i}^{m_i}\|Z_j\lambda_iy_i\|w_{j}\Big\|\\
=& \Big\|\sum_{i\in B}|\lambda_i|\sum_{j=n_i}^{m_i}\|Z_jy_i\|w_{j}\Big\|\\
=& \Big\|\sum_{i\in
B}|\lambda_i|z_i\Big\|.
\end{split}\end{displaymath}
Since $(z_i)_{i\in B}$ is unconditional, it follows that $(y_i)_{i\in
B}\sim(z_i)_{i\in B}\sim(f_{r_i})_{i\in B}$.

We now partition $A$ into finite sets $a_j$ by setting
$$
i\in a_j\equi y_i\in Z_j.
$$
Then for all $(\lambda_i)_{i=1}^\infty\in c_{00}$
\begin{displaymath}\begin{split}
\Triple{\sum_{i\in A}\lambda_iy_i}' 
=& \Big\|\sum_{j=1}^\infty\Norm{\sum_{i\in a_j}\lambda_iy_i}w_{j}\Big\|\\
=& \Big\|\sum_{j=1}^\infty\;\big(\sup_{i\in a_j}|\lambda_i|\big)w_{j}\Big\|\\
=& \Big\|\sum_{j=1}^\infty\Norm{\sum_{i\in a_j}\lambda_if_{r_i}}w_{j}\Big\|\\
=&\Triple{\sum_{i\in A}\lambda_if_{r_i}}'.
\end{split}\end{displaymath}
So $(y_i)_{i\in A}\sim(f_{r_i})_{i\in A}$, which finishes the proof.
\end{proof}


\end{document}